\DeclareMathAlphabet\mathbfcal{OMS}{cmsy}{b}{n}
\def\endproof{\relax\ifmmode\expandafter\endproofmath\else
  \unskip\nobreak\hfil\penalty50\hskip.75em\hbox{}\nobreak\hfil\bull
  {\parfillskip=0pt \finalhyphendemerits=0 \bigbreak}\fi}
\def\endproofmath$${\eqno\bull$$\bigbreak}
\def\bull{\vbox{\hrule\hbox{\vrule\kern3pt\vbox{\kern6pt}\kern3pt\vrule}\hrule}}
\newcommand\basvec[1]{\mathrm{e}_{#1}}
\newcommand\CDisk{\mathbb D}
\newcommand\ground{\mathfrak k}
\newcommand\gr{\mathrm{gr}}
\newcommand\Pong[2]{{\mathcal P}(#1,#2)}
\newcommand\OneHalf{\frac{1}{2}}
\newcommand\weight{\mathfrak w}
\newcommand\Alg\AlgA
\newcommand\Blg\AlgB
\newcommand\Ainf{{\mathcal A}_\infty}
\newcommand\Ainfty\Ainf
\newcommand\Clg{\mathcal C}
\newcommand\ClgZ{{\mathcal C}_{\Z}}
\newcommand\IdempRing{{\mathcal I}}
\newcommand\Zmod[1]{{\mathbb Z}/{#1}{\mathbb Z}}
\newtheorem{thm}{Theorem}[section]
\newtheorem{cor}[thm]{Corollary}
\newtheorem{lemma}[thm]{Lemma}
\newtheorem{defn}[thm]{Definition}
\newtheorem{remark}[thm]{Remark}
\numberwithin{equation}{section}
\newcommand\Idemp[1]{{\mathbf{I}}_{#1}}
\newcommand{\AlgA}{{\mathcal A}}
\newcommand{\AlgB}{{\mathcal B}}
\newcommand{\C}{\mathbb C} \newcommand{\Z}{\mathbb Z}   \newcommand{\R}{\mathbb R}
\newcommand\Field{\mathbb F}
\DeclareMathOperator{\Id}{Id}
\begin{document}
\title{Planar graphs deformations of bordered
  knot algebras}

\begin{abstract}
  In an earlier paper, we introduced ``bordered knot algebras'', which
  are graded algebras indexed by a pair of integers $(m,k)$.  In a
  subsequent paper, we introduced a two-parameter family of
  differential graded algebra, the ``pong algebras'', and identified
  their homology with the bordered knot algebras, and characterized
  the induced $A_\infty$ structure on the homology. The aim of the
  present paper is to give an explicit, combinatorial model for this
  $A_\infty$ structure on the bordered knot algebras, and a further
  weighted deformation of this structure, in the case where $k=1$.
\end{abstract}

\author[Peter S. Ozsv\'ath]{Peter Ozsv\'ath}
\thanks {PSO was partially supported by NSF grant number DMS-2104536, and the Simons Grant {\em New structures in low-dimensional topology}.}
\address {Department of Mathematics, Princeton University\\ Princeton, New Jersey 08544} 
\email {petero@math.princeton.edu}

\author[Zolt{\'a}n Szab{\'o}]{Zolt{\'a}n Szab{\'o}}
\thanks{ZSz was supported by NSF grant number DMS-1904628
  and the Simons Grant {\em New structures in low-dimensional topology}.}
\address{Department of Mathematics, Princeton University\\ Princeton, New Jersey 08544}
\email {szabo@math.princeton.edu}

\newcommand\eVec[1]{{\mathrm e}_{#1}}
\newcommand\BordAlg[2]{\Clg(#1,#2)}
\maketitle
\section{Introduction}

In~\cite{BorderedKnots}, we described certain algebras $\Clg(m,k)$,
which are related to knot Floer homology. Moreover
(cf.~\cite[Theorem~\ref{Pong:thm:HomologyPongAinf}]{Pong}; see also
Theorem~\ref{thm:UniqueCm} below), we showed that $\Clg(m,k)$ can be
given an $\Ainf$ structure which is uniquely characterized by its
grading and having a non-trivial action.

In this paper, we give an explicit, combinatorial description of this
$\Ainf$ structure, in the special case where $k=1$, to give an
explicit $\Ainf$ algebra $\Clg^0(m,1)$.  Furthermore, we give a
combinatorial description of certain weighted deformation ${\mathbfcal
C}(m,1)$ of $\Clg^0(m,1)$, in the sense of~\cite{AbstractDiag}.

The combinatorial description counts certain planar graphs; as such it
is closely related to the construction of the weighted algebra
associated to the torus from~\cite{TorusAlg}. Indeed, the verification of
the structure relation follows closely the corresponding verification
from~\cite[Section~3]{TorusAlg}.

This paper is organized as follows. In Section~\ref{sec:Cm1} we review
the construction of the bordered algebra ${\mathcal C}(m,1)$.  In
Section~\ref{sec:WtAlg} we recall the notion of weighted $A_\infty$
algebras, and set our grading conventions.  In
Section~\ref{sec:Tilings}, we define a class of decorated planar
graphs, and use these decorated graphs to define both the $A_\infty$
deformation of ${\mathcal C}(m,1)$ and its further weighted
deformation.  In Section~\ref{sec:Relation} we verify that the
algebras defined above satisfy the stated $A_\infty$ relation. In
Section~\ref{sec:Signs} we explain how to introduce signs into the
above discussion.

The constructions from this paper are related with constructions in
the wrapped Fukaya category; see~\cite[Theorem~7.26]{TorusAlg}
and~\cite{WrapPong}. In a little more detail, recall that in~\cite{Pong},
we constructed a differential graded algebra, the
{\em pong algebra}

The following is a special case of~\cite[Theorem~1.1]{Pong} (with $k=0$):

\begin{thm}
  The homology of the differential $\Pong{m}{m-1}$, with its induced $\Ainfty$
  structure is isomorphic to the $\Ainfty$ structure $\Clg(m,1)[t]$
  with the gradings and non-trivial $\mu_{2m-2}$ operation constructed
  in this paper.
\end{thm}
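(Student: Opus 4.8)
The plan is to deduce the statement from the uniqueness characterization of Theorem~\ref{thm:UniqueCm} together with \cite[Theorem~\ref{Pong:thm:HomologyPongAinf}]{Pong}. Specialized to $k=0$, the latter identifies the homology of $\Pong{m}{m-1}$, with its induced $\Ainfty$ structure, with $\Clg(m,1)[t]$ carrying \emph{some} $\Ainfty$ structure whose only nontrivial higher product is $\mu_{2m-2}$ and which is compatible with the bigrading. Theorem~\ref{thm:UniqueCm} asserts that an $\Ainfty$ structure of this shape is determined up to isomorphism by its grading together with the requirement that the action be non-trivial. So the whole task reduces to checking that the combinatorial operations defined in this paper form an $\Ainfty$ structure of exactly this type.

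Accordingly, I would verify three things about the operations $\mu_n$ defined by counting decorated planar graphs. First, that they obey the $\Ainfty$ relations; this is the substantive point, and I would establish it by a cancellation-in-pairs argument over the degenerations of the graphs, organized as in \cite[Section~3]{TorusAlg}. Second, that each $\mu_n$ is homogeneous of the degree dictated by the grading conventions of Section~\ref{sec:WtAlg}, so that in fact $\mu_{2m-2}$ is the only higher product that can be nonzero. Third, that $\mu_{2m-2}$ really is nonzero, which I would see by exhibiting a single admissible tuple of inputs whose associated planar graph contributes a nonvanishing term, so that the action is indeed non-trivial.

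Granting these three points, the combinatorial structure satisfies precisely the hypotheses of Theorem~\ref{thm:UniqueCm}; hence it is $\Ainfty$ isomorphic to the structure transported from $H_*(\Pong{m}{m-1})$ by \cite[Theorem~\ref{Pong:thm:HomologyPongAinf}]{Pong}. Composing the two identifications gives the desired isomorphism, which is what the theorem asserts.

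The main obstacle is the first point, the $\Ainfty$ relation. The grading count is essentially bookkeeping, and non-triviality needs only one explicit example, but proving $\sum \mu(\dots,\mu(\dots),\dots)=0$ requires pairing up all the ways a decorated graph can degenerate, and the subtlety lies in how interior vertices interact with the boundary markings of the planar graphs. It is exactly this combinatorial matching that mirrors the torus computation of \cite[Section~3]{TorusAlg}, and arranging the pairing as a fixed-point-free involution — carrying the signs introduced in Section~\ref{sec:Signs} correctly through the cancellation — is where the real work lies.
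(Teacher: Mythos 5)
Your proposal is correct and follows essentially the same route as the paper: the result is deduced from the uniqueness statement of Theorem~\ref{thm:UniqueCm} once one verifies that the graph-counting operations satisfy the $\Ainfty$ relation (Theorem~\ref{thm:unWeightedAinftyAlgebra}, proved by a fixed-point-free pairing of degenerations as in~\cite[Section~3]{TorusAlg}), that they respect the gradings of Equation~\eqref{eq:GradedHypotheses} (Lemma~\ref{lem:GradedOperations}), and that $\mu_{2m-2}$ is non-trivial, witnessed by the single-vertex graph giving $\mu_{2m-2}(U_1,R_2,\dots,R_{m-1},U_m,L_{m-1},\dots,L_2)=\Idemp{1}t$. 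One correction: do not try to show that $\mu_{2m-2}$ is the \emph{only} non-vanishing higher product --- it is not (e.g.\ the $\mu_8$ and $\mu_{10}$ operations computed in Section~\ref{sec:Relation} are non-zero for $m=4$), and the uniqueness theorem requires only the grading constraints together with the non-triviality of $\mu_{2m-2}$, not the vanishing of the other operations.
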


Moreover,~\cite[Theorem~1.1]{Pong}, identifies $\Pong{m}{m-1}$ with
the endomorphism of a collection of objects Lagrangians associated to
parallel, vertical lines in a wrapped Fukaya category of
$(m-1)$-fold symmetric product of $\C$.  Thus, we can view the computations
here as giving a more efficient shorthand for computing the operations
in that wrapped Fukaya category.

There is another, Koszul dual perspective, though. We can consider
$\C$ with $m$ punctures, and consider arcs connecting those
punctures. The algebra $\Clg(m,1)$ can be identified with a wrapped
Fukaya category in $\C$ (i.e. the first symmetric product) relative to
those $m$ punctures; see~\cite{LekiliPolishchuk}. We will return to
this in future work~\cite{Koszul}.

It is interesting to compare this work with other Fukaya categorical
approaches to bordered algebras; see for
example~\cite{KotelskiyWatsonZibrowius,LaudaLicataManion,EllisPetkovaVertesi,Zibrowius}.

{\bf Acknowledgements:} We wish to thank Robert Lipshitz, Dylan
Thurston, and Andrew Manion for interesting discussions leading up to
this work.  The collaboration of the first author with Lipshtiz and
Thurston (specifically~\cite{TorusAlg}) served as an inspiration for
this work.

\section{The bordered algebra ${\mathcal C}(m,1)$}
\label{sec:Cm1}

We consider bordered algebras $\BordAlg{m}{k}$ from~\cite{BorderedKnots} with $k=1$,
following notation from~\cite[Section~3.2]{HolKnot}; 
  also~\cite[Section~\ref{Pong:sec:Clg}]{Pong}.

We give the following simple, concrete description of $\BordAlg{m}{1}$ for $m>2$ over $\Field=\Zmod{2}$.
Consider the quiver $Q$ with vertices labeled $\{[1],\dots,[m-1]\}$
and whose edges are $L_i\colon [i]\to [i-1]$,
$R_i\colon [i-1]\to [i]$ with $2\leq i\leq m-2$; and 
$U_1\colon [1]\to [1]$, $U_m \colon [m-1]\to [m-1]$.
(See Figure~\ref{fig:Quiver} for an illustration when $m=4$.)
Consider its path algebra $P$ for $Q$, divided out by the relations:
\[ L_i L_{i-1}=0, \qquad R_{i-1} R_{i}=0,\qquad L_2 U_1 = U_1 R_2=R_{m-1} U_m=U_m L_{m-1}=0,\]
with $3\leq i\leq m-1$.
The vertices $[x]$ in $Q$ correspond to idempotents $I_{x}$ in the path algebra.
We find it convenient to write 
$U_i=R_i L_i + L_i R_i$ for $i=2,\dots, m-1$.

\begin{figure}[ht]
\input{Quiver.pstex_t}
\caption{\label{fig:Quiver} {\bf{Quiver for $\Clg(4,1)$.}}}
\end{figure}

In the degenerate case where $m=2$, we take $\Clg(2,1)=\Field[U_1,U_2]/U_1 U_2$.

Let $\IdempRing(m,1)\subset \Clg(m,1)$ denote the subring of
idempotents corresponding to the constant paths; and let
$\Clg_+(m,1)\subset \Clg(m,1)$ denote the $\IdempRing(m,1)$-bimodule
generated by non-constant paths.

\begin{defn}
  \label{def:Pure}
 An algebra element in $\Clg(m,1)$
corresponding to a non-constant path is called a {\em pure algebra element}.
\end{defn}
For example, in $\Clg(4,1)$, corresponding to the quiver from
Figure~\ref{fig:Quiver}, the elements $U_1$ and $R_2\cdot L_2$ are
pure algebra elements.  Note that $U_2=R_2\cdot L_2+L_2\cdot R_2$ is not a pure algebra element.

The algebra $\BordAlg{m}{1}$ is equipped with a grading denoted $|\cdot |$
with values in $\OneHalf \Z^m$, called the {\em shadow grading}, characterized by the following properties:
\begin{align*}
  |\iota|&= 0 \\
  |L_i|&=|R_i| = \OneHalf \eVec{i} \\
  |U_1|&= \eVec{1} \\
  |U_m|&= \eVec{m},
\end{align*}
where $\{\eVec{i}\}_{i=1}^m$ is the standard basis for $\Z^m$, and $\iota$
is any idempotent. In earlier papers, this grading was called a
``weight grading'' and it was denoted ${\mathfrak w}$. We do not use
this language presently, so as to separate from the notion of
``weighted algebras'', as in Section~\ref{sec:WtAlg}.

We consider now $\Ainfty$ deformations of $\BordAlg{m}{1}$, in the
following sense.  Consider the bigraded algebra $\BordAlg{m}{1}[t]$,
where the shadow grading is extended so that
$|t|=\sum_{i=1}^m
\eVec{i}$; and an additional $\Z$-valued grading $\gr$, with
$\gr(\BordAlg{m}{1})=0$ and $\gr(t)=2m-4$.

We consider graded $\Ainfty$ structures on  $\BordAlg{m}{1}[t]$, satisfying the grading requirements:
  \begin{align}
    \label{eq:GradedHypotheses}
    |\mu_{\ell}(a_1,\dots,a_\ell))|&=\sum_{i=1}^{\ell}|a_i| \\
    \gr(\mu_{\ell}(a_1,\dots,a_\ell))&=\ell-2+ \sum_{i=1}^{\ell}\gr(a_i). \nonumber
  \end{align}

We have the following:

\begin{thm}
  \label{thm:UniqueCm} [Theorem~\ref{Pong:thm:CharacterizeActions} of~\cite{Pong}]
  Up to quasi-isomorphism, there is a unique graded $\Ainf$ structure on 
  $\BordAlg{m}{1}[t]$ (with $m>2$) extending the natural algebra structure on $\BordAlg{m}{1}$,
  with non-trivial $\mu_{2m-2}$-operation.
\end{thm}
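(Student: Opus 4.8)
The plan is to treat this as a deformation-theory problem. Graded $\Ainf$ structures on $\BordAlg{m}{1}[t]$ extending the fixed associative product $\mu_2$ are, up to the appropriate equivalence, classified by Maurer--Cartan elements $\sum_{\ell\geq 3}\mu_\ell$ in the (reduced) Hochschild cochain complex of $\BordAlg{m}{1}[t]$, equipped with the Gerstenhaber bracket and differential $[\mu_2,-]$; two such elements related by a gauge transformation yield quasi-isomorphic $\Ainf$ algebras, so uniqueness up to quasi-isomorphism is the statement that the space of these elements, modulo gauge, consists of essentially one nonzero point. The two gradings in~\eqref{eq:GradedHypotheses} refine this cochain complex into multigraded pieces, and the whole argument is an analysis of which pieces can be nonzero.

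First I would use the grading hypotheses to cut down the list of possible operations. Writing each $\mu_\ell$ as $t$-linear, it suffices to understand it on inputs drawn from $\BordAlg{m}{1}$ (no $t$), which have $\gr=0$; the second line of~\eqref{eq:GradedHypotheses} then forces the output to have $\gr=\ell-2$, hence to lie in $t^{j}\cdot\BordAlg{m}{1}$ with $j(2m-4)=\ell-2$. Thus $\mu_\ell=0$ unless $\ell-2$ is a nonnegative multiple of $2m-4$, i.e. unless $\ell\in\{2,\,2m-2,\,4m-6,\dots\}$; in particular $\mu_3,\dots,\mu_{2m-3}$ all vanish automatically, and the first possible higher operation is exactly $\mu_{2m-2}$, valued in $t\cdot\BordAlg{m}{1}$. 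The shadow grading imposes a further sharp constraint: since $|L_i|=|R_i|=\OneHalf\eVec{i}$, $|U_1|=\eVec{1}$, $|U_m|=\eVec{m}$ and $|t|=\sum_i\eVec{i}$, the monomials that can appear in the output of $\mu_{2m-2}$ are severely restricted, which is what ultimately pins the operation to an essentially unique shape.

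Next I would run the obstruction-theoretic induction. Because $\mu_3,\dots,\mu_{2m-3}$ vanish, the $\Ainf$ relation in arity $2m-1$ collapses to $[\mu_2,\mu_{2m-2}]=0$, so $\mu_{2m-2}$ is a Hochschild cocycle of the prescribed multidegree, with coefficients in the bimodule $t\cdot\BordAlg{m}{1}\cong\BordAlg{m}{1}$ shifted by $\sum_i\eVec{i}$ in the shadow grading. The crux is then a Hochschild cohomology computation: one shows that the relevant graded piece of $\HH^{2m-2}(\BordAlg{m}{1})$ in this multidegree is one-dimensional over $\Field=\Zmod{2}$. Since infinitesimal gauge transformations act precisely by adding coboundaries $[\mu_2,\xi]$ (with $\xi$ a length-$(2m-3)$ cochain in the matching bidegree), this one-dimensionality means $\mu_{2m-2}$ is, up to quasi-isomorphism, either $0$ or the unique nonzero class; the hypothesis that $\mu_{2m-2}$ be non-trivial selects the latter. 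Continuing to arities $4m-6,\,6m-10,\dots$, I would check that the successive obstructions (living in $\HH$ in the next multidegree, built from Gerstenhaber brackets of the operations already constructed) vanish so the structure extends, and that the residual freedom at each stage is absorbed by gauge, giving uniqueness up to quasi-isomorphism.

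The main obstacle is the Hochschild cohomology computation of $\BordAlg{m}{1}$ in the required range of homological and internal degrees. Since all the defining relations $L_iL_{i-1}=R_{i-1}R_i=0$, $L_2U_1=U_1R_2=R_{m-1}U_m=U_mL_{m-1}=0$ are monomial, $\BordAlg{m}{1}$ is a (generally infinite-dimensional) monomial quotient of a path algebra, so a Bardzell-type minimal bimodule resolution is available; I would compute $\HH$ from it and then extract the pieces cut out by the shadow and $\gr$ gradings. The gradings render every relevant piece finite-dimensional, and the hard part will be the bookkeeping that forces exactly one-dimensionality in arity $2m-2$ while making the higher obstruction groups vanish — this multidegree accounting, rather than any single deep step, is where the real work lies.
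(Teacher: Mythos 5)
The paper does not actually prove this statement: Theorem~\ref{thm:UniqueCm} is imported from~\cite{Pong} (it is stated as a special case of Theorem~11.3 there, where uniqueness is proved for the whole two-parameter family $\BordAlg{m}{k}$), and the only surrounding in-paper content is the remark comparing it to~\cite[Theorem~5.44]{TorusAlg}. So there is no in-paper argument to measure your proposal against; I can only assess it on its own terms and against the cited analogues. Your outline is the standard, and almost certainly the intended, route: the grading bookkeeping is correct and genuinely forces $\mu_\ell=0$ unless $\ell-2$ is a nonnegative multiple of $2m-4$; the arity-$(2m-1)$ relation then collapses to $[\mu_2,\mu_{2m-2}]=0$; and reducing uniqueness to a Hochschild cohomology computation in a fixed multidegree is exactly how the analogous statement for the torus algebra is handled in~\cite{TorusAlg}.

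That said, there is a genuine gap: the entire content of the theorem is concentrated in the two assertions you defer, namely that the forced multidegree piece of $\HH^{2m-2}(\BordAlg{m}{1})$ is one-dimensional over $\Zmod{2}$, and that the groups controlling the higher obstructions and ambiguities (arities $4m-6$, $6m-10$, \dots, in their forced multidegrees) behave so that the extension exists and is gauge-rigid. Neither is computed, and neither is routine; if the first group were two-dimensional, or a later one nonzero in the wrong way, uniqueness up to quasi-isomorphism could fail, so the Bardzell-type resolution you gesture at is the proof rather than an implementation detail. Two smaller points to make explicit: you assume $t$-equivariance of the operations, which is part of the setup inherited from~\cite{Pong} and should be flagged as a hypothesis rather than derived; and ``non-trivial $\mu_{2m-2}$'' must be read as ``nonzero class in $\HH$,'' since a coboundary $\mu_{2m-2}$ can be gauged away --- you do address this, but the existence of a structure realizing the nonzero class is a separate input (supplied in this paper by the explicit one-vertex tiling graph in the corollary following the theorem).
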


\begin{remark}
  The degenerate case $m=2$ holds as well. 
  In that case, we are deforming $\Clg(2,1)\cong\Field[U_1,U_2]/U_1 U_2$
  to the algebra $\Field[U_1,U_2]$, where $t=U_1 U_2$. (For example, we introduce operations $\mu_2(U_1,U_2)=\mu_2(U_2,U_1)=t$.)
\end{remark}

Theorem~\ref{thm:UniqueCm} is a special case of~\cite[Theorem~\ref{Pong:thm:CharacterizeActions}]{Pong}, where uniqueness is
actually proved for the two-parameter family $\BordAlg{m}{k}$; compare~\cite[Theorem~5.44]{TorusAlg}.

\section{Weighted $A_\infty$ algebras}
\label{sec:WtAlg}

Throughout this paper, we will use the language of weighted $A_\infty$
algebras from~\cite{AbstractDiag}, recalling the basic definitions
here. For simplicity, these will be over a ground ring $\ground$ with
characteristic $2$. We return to the case of signs in
Section~\ref{sec:Signs}. The definition of a weighted $A_\infty$
algebra is as follows.

\begin{defn}
Fix a real $n$-dimensional vector space $W$ with a preferred basis
$\{\basvec{i}\}_{i=1}^n$, and let $W_{\geq 0}$ denote the set of vectors that
can be written in the form $\sum_i w_i \basvec{i}$ where $w_i\in \R^{\geq
  0}$.  Fix an algebra $\ground$ with characteristic $2$.
 A {\em $W$-weighted $A_\infty$ algebra
  over $\ground$} is the following object is the following data:
\begin{itemize}
\item a graded $\ground$-bimodule $A$
\item for each $w\in W$, a linear map
  of $\ground$-bimodules
  \[ \mu^{w}_n \colon \overbrace{A\otimes \dots\otimes A}^n \to A,\]
  where the tensor it taken over $\ground$. Moreover, $\mu^w_n$ is
  of degree $n-2+2\sum w_i$; i.e. if $a_1,\dots,a_n$ are homogeneous,
  then $\mu^{w}_n(a_1,\dots,a_n)$ is homogeneous of degree
  \[ \gr(\mu^{w}_n(a_1,\dots,a_n))=\left(\sum \gr(a_i)\right)+n-2 + 2\left(\sum_i w_i\right).\]
  In particular, when the sequence of algebra elements is empty, we obtain
  elements $\mu^w_0\in A$ of degree $-2+2\sum w_i$.
  The vector $w\in W$ is called the {\em weight} of the operation $\mu^{w}_n$.
\item The element $\mu^{\vec{0}}_0=0$.
\end{itemize}
These operations are further required to satisfy the following structure relation  for all 
sequences $(a_1,\dots,a_n)$ in $A$
and vectors $w\in W_{\geq 0}$:
\[ \sum_{1\leq i\leq j\leq n+1,u+v=w} \mu^u_{n-j+i+1}(a_1,\dots,a_{i-1},
\mu^v_{j-i}(a_i,\dots,a_{j-1}),a_j,\dots,a_n)=0.\]
\end{defn}

We have suppressed signs in the above formula, as we are presently
working in characteristic two. See Equation~\eqref{eq:WeightedAinf}
for the sign refined version. 

\begin{remark}
  When $W$ is one-dimensional, this coincides with the definition of
  weighted algebras~\cite[Definition~4.1]{AbstractDiag}, with
  $\kappa=-2$.  Weighted $\Ainfty$ algebras are equivalent to curved
  $\Ainfty$ algebras over formal power series, with a constraint on
  the curvature; compare~\cite[Chapter~3]{FOOO}.
\end{remark}

Given a weighted $A_\infty$ algebra, we can specialize to the
operations $\{\mu^{\vec{0}}_n\}_{n=1}^{\infty}$, i.e. those with
vanishing weight.  This restriction is an (ordinary) $A_\infty$
algebra, $(A,\mu^{\vec{0}})$. We say that $(A,\mu)$ is a {\em weighted
  deformation} of $(A,\mu^{\vec{0}})$. Moreover, we will typically
abbreviate $\vec{0}$ by $0$.

\begin{defn}
  A weighted $\Ainfty$ algebra is called {\em unital} if 
  there is a distinguished $1\in A$ with the property that
  \[ \mu^0_2(a,1)=\mu^0_2(1,a)=a \]
  and 
  for any sequence of algebra elements $(a_1,\dots,a_n)$ with some $a_i=1$
  and $(n,\vec{w})\neq (2,0)$, we have that 
  $\mu^{\vec{w}}_n(a_1,\dots,a_n)=0$. 
\end{defn}

(Note that this condition is often called ``strictly unital'' in the
literature, to leave room for other types of unitality.)

The algebra $\Clg(m,1)$ is unital, where $1$ is the sum of all the idempotents
corresponding to all the constant paths; i.e.
$1=\Idemp{1}+\dots+\Idemp{m-1}$.

\section{Tiling patterns}
\label{sec:Tilings}

Fix an integer $m\geq 3$. We will describe here the tiling patterns used for
the definition of the deformation of $\Clg(m,1)$ studied in~\cite{Pong}.

The definitions here are parallel to the material from~\cite[Section 3.2]{TorusAlg}.

\begin{defn}
  A {\em planar, rooted graphs} is a graph
  $\Gamma$, 
  equipped with an embedding into the disk $\CDisk$, so that
  $\Gamma\cap\partial\CDisk$ consists of the leaves of $\Gamma$, one of
  which is distinguished and called the {\em root}.
\end{defn}

We consider planar, rooted graphs, each of whose nodes have degree
$2m-2$, satisfying further conditions, formulated below, and illustrated
in Figure~\ref{fig:NodeLabel}.

Around each vertex $v$, orient the edges so that $m-1$ consecutive
edges (which are considered consecutive with respect to the circular
ordering on the edges containing $v$) are oriented into $v$, and the
further $m-1$ consecutive edges are ordered out of $v$. Moreover, we
label the edges into $v$ by integers $1,\dots,m-1$, with respect to
the counter-clockwise ordering; and we label the edges out of $v$
$1,\dots,m-1$ with respect to the clockwise ordering.

\begin{figure}[ht]
\input{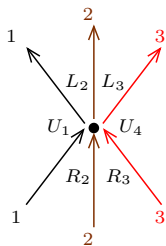}
\caption{\label{fig:NodeLabel} {\bf{Labels around each node.}}
We have drawn here the decorations around each node in the case where $m=4$.}
\end{figure}

A {\em compatibly decorated graph} is a rooted, planar graph all of
whose edges are oriented and labeled by integers in
$1,\dots,m-1$; and the orientations and labelings are consistent with
the above local conventions about each internal vertex $v$.
Clearly, the labels are determined by the orientations; but not all oriented
graphs can be labeled to satisfy the compatibility conditions; see Figure~\ref{fig:Unlabelable}

\begin{figure}[ht]
\input{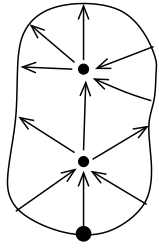}
\caption{\label{fig:Unlabelable} {\bf{This oriented graph cannot be labeled compatibly.}}}
\end{figure}

For a compatibly decorated graph, there are $2m-2$ sectors around each
internal vertex $v$. We label these sectors by letters
$U_1,L_2,\dots,L_{m-1},U_m,R_{m-1},\dots,R_2$ with respect to the
clockwise ordering of the edges around $v$. Indeed, we label so that
\begin{itemize}
\item  $L_i$  is the sector between 
  the outgoing edge labeled $i-1$ and the outgoing edge labeled $i$.
\item 
  $R_i$ is the sector between the incoming edge labeled $i-1$ and the
  incoming edge labeled $i$.
\item $U_1$ labels the sector between the incoming edge labeled $1$ and the outgoing edge labeled $1$.
\item $U_m$ labels the sector between the outgoing edge labeled $m-1$ and the incoming edge labeled $m-1$.
\end{itemize}

\begin{defn}\label{def:CenteredTilingGraph}
A {\em centered tiling graph} is a compatibly decorated graph satisfying the further two properties:
\begin{itemize}
\item The graph is connected.
\item All cycles have length one.  (This means that if we have a
  sequence of distinct edges $e_1,\dots,e_k$ in $\Gamma$ so that the edge $e_i$
  goes from the vertex $v_i$ to the vertex $v_{i+1}$, then $v_1\neq
  v_{k+1}$, except in the special case where $k=1$.)
\end{itemize}
\end{defn}

\begin{figure}[ht]
\input{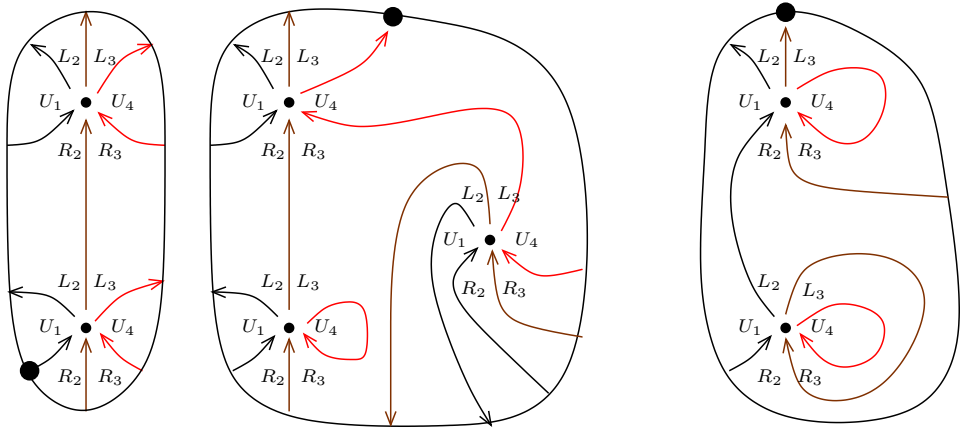}
\caption{\label{fig:TilingEx} {\bf{Examples of tiling graphs.}}}
\end{figure}

\subsection{Actions}

Each tiling graph $\Gamma$ has a corresponding algebra sequence and weight.

\begin{defn}
  \label{def:AlgebraSequence}
  The {\em algebra sequence} $\vec{a}(\Gamma)$ 
  is obtained as follows. The boundary of $\Gamma$
  divides the boundary of the disk into intervals. For each interval $I$
  there is a unique connected component $X(I)$ of
  $\Delta\setminus\Gamma$. The algebra element associated to $I$,
  $a(I)$, is obtained by multiplying together the algebra elements
  associated to the sectors appearing in $X(I)$. Order the intervals
  $I_1,\dots,I_n$ counterclockwise around the boundary, starting at the
  root vertex.
\end{defn}

The weight vector associated to $\Gamma$, $\vec{w}(\Gamma)$ is
obtained as follows.  Note that $\Gamma$ divides $D$ into $k$
components.  Each component that does has one of the following types:
\begin{itemize}
\item It is a monogon containing $U_1$ (Type $1$)
\item It is a monogon containing $U_m$ (Type $m$) 
\item It is a bigon containing $R_i$ and $L_i$ (Type $i$).
\end{itemize}
The weight vector  $\vec{w}=(w_1,\dots,w_m)$
has components where $w_i$ counts the number of components of type $i$.

The graph $\Gamma$ also determines a idempotent $\iota(\Gamma)=\Idemp{[k]}$, where $k$ is 
the number of the root edge. The graph $\Gamma$ also determines an exponent
$d$ which is  the number of internal vertices.

We add further actions 
\begin{equation}
  \label{eq:AddedMu0s}
  \mu^{\eVec{i}}_0=U_i,
\end{equation} where
$\eVec{i}$ is the standard $i^{th}$ basis vector in the
weight space $\Z^m$. 

Note that there are no further $\mu^{\vec{w}}_0$ actions; there are
also no $\mu^{\vec{w}}_1$ actions. Moreover $\mu^0_2$ is induced from
the underlying algebra, but there are new weighted $\mu^{\vec{w}}_2$
actions with non-zero weight vector. For example, in
Figure~\ref{fig:WeightedM2}, we have illustrated a tree that shows
\[ \mu_2^{2\eVec{3}}(U_1^2,U_2^2)=t^2\Idemp{[1]}.\]

\begin{figure}[ht]
\input{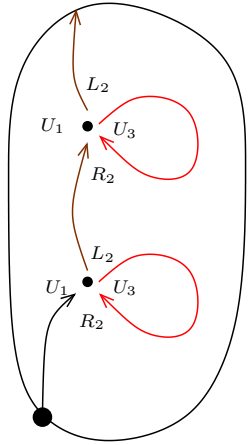}
\caption{\label{fig:WeightedM2} {\bf{Weighted $\mu_2$ tree.}}}
\end{figure}

The following observation will be useful in the future: 

\begin{lemma}
  \label{lem:GradedOperations}
  Given $\Gamma$, we have that
  \[ m\cdot d(\Gamma)=\sum_{i=1}^n|a_i|+\sum_i w_i\]
  where $(w_1,\dots,w_m)=\weight(\Gamma)$ and $(a_1,\dots,a_n)=\vec{a}(\Gamma)$.
\end{lemma}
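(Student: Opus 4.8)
The displayed identity is an equation of half-integers, so on the right I read $|a_i|$ as the total shadow grading $\overline{|a_i|}:=\sum_{j=1}^{m}(|a_i|)_j\in\OneHalf\Z$, the left-hand side $m\cdot d(\Gamma)$ being an integer. The plan is to prove it by double-counting the shadow gradings of the sectors of $\Gamma$. The key starting point is that the multiset of $2m-2$ sectors around any internal vertex is always $U_1,L_2,\dots,L_{m-1},U_m,R_{m-1},\dots,R_2$, regardless of the global combinatorics; hence the total shadow grading contributed by a single vertex is always $|U_1|+|U_m|+\sum_{i=2}^{m-1}(|L_i|+|R_i|)$, whose total degree is $1+1+(m-2)(\OneHalf+\OneHalf)=m$. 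Summing over the $d(\Gamma)$ internal vertices, the total shadow grading of all sectors of $\Gamma$ equals $m\cdot d(\Gamma)$, the left-hand side.

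I then recompute this same total by grouping the sectors according to the region of $\Delta\setminus\Gamma$ that contains each; since every sector lies in the closure of exactly one region, the regions partition the sectors. The interior regions (those disjoint from $\partial\Delta$) are, by the description of $\weight(\Gamma)$, precisely the type-$1$ and type-$m$ monogons and the type-$i$ bigons ($2\le i\le m-1$): a type-$1$ (resp.\ type-$m$) monogon carries the single sector $U_1$ (resp.\ $U_m$), of total grading $1$, while a type-$i$ bigon carries the pair $R_i,L_i$, of total grading $\OneHalf+\OneHalf=1$. So each interior region contributes total grading $1$, and since $w_i$ counts the type-$i$ interior regions, they contribute $\sum_i w_i$ in all. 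The remaining regions meet $\partial\Delta$ and are exactly the components $X(I_1),\dots,X(I_n)$; by Definition~\ref{def:AlgebraSequence}, $a_i$ is the product of the sectors in $X(I_i)$, so $\overline{|a_i|}$ is the total grading of those sectors and the boundary regions contribute $\sum_{i=1}^{n}\overline{|a_i|}$. Equating the two counts gives the lemma.

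The one point that genuinely needs care — and which I expect to be the main obstacle — is that the grouping of the boundary-meeting regions by interval neither skips nor repeats a region, i.e.\ that $I_j\mapsto X(I_j)$ is a bijection onto the boundary-meeting regions. This amounts to each such region meeting $\partial\Delta$ in a single arc, which follows from $\Gamma$ being connected with all of its leaves on $\partial\Delta$: a region meeting $\partial\Delta$ in two arcs would, via a path across it closed up along the boundary, separate $\Gamma$ and contradict connectedness (the disk analogue of the bookkeeping in \cite[Section~3.2]{TorusAlg}). As a sanity check, the same count in fact yields the stronger coordinatewise identity $d(\Gamma)=\big(\sum_{i=1}^{n}|a_i|\big)_j+w_j$ for each $j$, since every vertex contributes exactly $1$ to the $j$-th coordinate (through $U_1$ if $j=1$, through $U_m$ if $j=m$, and through the pair $L_j,R_j$ otherwise) while an interior region contributes to the $j$-th coordinate only if it has type $j$; summing over $j=1,\dots,m$ recovers the displayed formula.
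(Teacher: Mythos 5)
Your proof is correct and is essentially the paper's own argument: the paper likewise observes that the sum of the shadow gradings of the sectors around each node has total degree $m$, and then sorts these contributions according to whether the corresponding region of $\CDisk\setminus\Gamma$ touches the boundary circle (feeding into $\sum|a_i|$) or is an interior monogon/bigon (feeding into $\sum_i w_i$). Your extra observations --- the bijectivity of $I_j\mapsto X(I_j)$ and the coordinatewise refinement $d(\Gamma)=\bigl(\sum_i|a_i|\bigr)_j+w_j$ --- are sound but not needed beyond what the paper's two-line proof already uses.
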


\begin{proof}
  The sum of $|b_i|$ around each node equals $m$.
  These algebra elements either contribute to $\sum|a_i|$ 
  if the corresponding region touches the boundary circle;
  otherwise, they contribute to $\sum_i w_i$.
\end{proof}

Define maps
\[ c^{\vec{w}}_n\colon \Clg_+^{\otimes n}\to \IdempRing[t]\]
(where the tensor product on $\Clg_+$ is understood over the ground ring $\IdempRing{m,1}$)
by the formula
\[ c^{\vec{w}}_n(a_1,\dots,a_n)=\sum_{\{\Gamma\big|w(\Gamma)=\vec{w},{\vec a}(\Gamma)=(a_1,\dots,a_n)\}} \iota(\Gamma) \cdot t^{d(\Gamma)}.\]

For example, the 
pictures in Figure~\ref{fig:TilingEx} correspond to 
\begin{align*}
  c^{0}_{10}(R_2,R_3,U_4,U_3,U_4,L_3,L_2,U_1,U_2,U_1) =t^2 \Idemp{[1]}\\
  c^{\eVec{4}}_{12}(L_3,L_2,U_1,U_2,U_1,R_2,U_3^2,L_2, U_1,R_2,R_3,U_4^2)=t^3\Idemp{[3]} \\
  c^{\eVec{3}+2\eVec{4}}_4(L_2,U_1^2,R_2 U_2,U_3)=t^2\Idemp{[2]}.
\end{align*}
Note that in the above expressions, we are using algebra elements,
such as $U_3$, which are not pure, in the sense of
Definition~\ref{def:Pure}. Rather, $U_3=R_3L_3+L_3R_3$ 
is a sum of two pure algebra
elements, in two differing idempotents. However, 
the initial idempotent is determined in the above expressions.

Next, define
\[
  \mu^{\vec{w}}_n\colon \Clg_+^{\otimes n}\to \Clg[t] 
\]
by 
\begin{align*}\mu^{\vec{w}}_n(a_1,\dots,a_n)=& c^{\vec{w}}_n(a_1,\dots,a_n) \\
& +
\sum_{a_1=a\cdot a_1'} a\cdot c^{\vec{w}}_n(a_1',a_2,\dots,a_n) \\
& +
\sum_{a_n=a_n'\cdot a}  c^{\vec{w}}_n(a_1,\dots,a_{n-1},a_n')\cdot a.
\end{align*}
Terms of the first kind are called {\em centered}; those of the second type are called {\em left-extended}; and those of the third kind are called
called {\em right-extended}.
Extend the above map to
\begin{equation}
  \label{eq:WeightedAinftyAlgebra}
  \mu^{\vec{w}}_n\colon (\Clg[t])^{\otimes n}\to \Clg[t] 
\end{equation}
subject to the following properties:
\begin{itemize}
\item $\mu^{\vec{w}}_n$ is $t$-equivariant
  (in particular, we can view $t$ as lying in the base ring over which
  we are taking the tensor product).
\item 
  $\mu^{\vec{w}}_n(a_1,\dots,a_m)=0$ if any element $a_i\in\Clg[t]$ is an idempodent, provided that $(\vec{w},n)\neq (0,2)$.
\end{itemize}

The input algebra sequence to an extended graph is defined similar to
the centered case (Definition~\ref{def:AlgebraSequence}), bearing in
mind that for the extending vertices, the corresponding algebra
elements are seen only from one side. Similarly, the output algebra element
is the product of all of the algebra elements on the extending sequence,
times $t^{d(\Gamma)}$.

\begin{lemma}
  \label{lem:NonMultiplyable}
  Fix $(\vec{w},n)\neq (0,2)$ and a sequence $(a_1,\dots,a_n)$ of pure
  algebra elements in $\Clg_+$.  If
  $\mu^{\vec{w}}_n(a_1,\dots,a_n)\neq 0$, then for each $i=1,\dots
  n-1$ we have that $a_i\cdot a_{i+1}= 0$.
\end{lemma}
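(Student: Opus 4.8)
The plan is to argue by contraposition, working with a single contributing graph. Since $\mu^{\vec{w}}_n(a_1,\dots,a_n)$ is a sum of terms indexed by tiling graphs --- centered, left-extended, and right-extended --- the hypothesis $\mu^{\vec{w}}_n(a_1,\dots,a_n)\neq 0$ means that at least one such graph $\Gamma$ contributes; fix one. First I would recall from Definition~\ref{def:AlgebraSequence} that the leaves of $\Gamma$ cut $\partial\CDisk$ into the intervals $I_1,\dots,I_n$, read counterclockwise from the root, so that between the consecutive inputs $a_i$ and $a_{i+1}$ there is exactly one leaf edge $e_i$ of $\Gamma$, separating the regions $X(I_i)$ and $X(I_{i+1})$. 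Because every non-leaf node has degree $2m-2\geq 4$ and $\Gamma$ is connected, the inner endpoint $v$ of $e_i$ is an internal vertex (alternatively, by Lemma~\ref{lem:GradedOperations} a contribution with $n\geq 1$ forces $d(\Gamma)\geq 1$).

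The key local claim I would isolate is that the last generator of the path $a_i$ and the first generator of the path $a_{i+1}$ are precisely the two sector-generators flanking $e_i$ at $v$. Indeed, reading off $a_i$ as the product of the sectors appearing in $X(I_i)$, the two ends of this product sit at the leaf edges $e_{i-1}$ and $e_i$ bounding $X(I_i)$, and with the counterclockwise conventions $a_i$ terminates at the sector of $X(I_i)$ adjacent to $e_i$, while $a_{i+1}$ begins at the sector of $X(I_{i+1})$ adjacent to $e_i$ on the opposite side. Thus it suffices to show that, for any edge of a compatibly decorated graph, the two sectors immediately clockwise and counterclockwise of that edge multiply to zero in $\Clg(m,1)$ in \emph{either} order; granting this, the product of the last generator of $a_i$ with the first generator of $a_{i+1}$ is a length-two subpath of $a_i\cdot a_{i+1}$ that vanishes, and since all the defining relations are monomial, the entire product $a_i\cdot a_{i+1}$ vanishes.

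To verify the local claim I would run through the $2m-2$ positions of an edge around $v$, using the sector labelling fixed before Definition~\ref{def:CenteredTilingGraph}. Across the outgoing edge labelled $1$ the flanking sectors are $U_1$ and $L_2$; across an outgoing edge labelled $j$ with $2\leq j\leq m-2$ they are $L_j$ and $L_{j+1}$; across the outgoing edge labelled $m-1$ they are $L_{m-1}$ and $U_m$; and symmetrically one finds $R_2,U_1$; $R_{j+1},R_j$; and $U_m,R_{m-1}$ for the incoming edges. In each case one order of multiplication is non-composable on idempotents (hence zero), while the other order is composable and is exactly one of the defining relations $L_iL_{i-1}=0$, $R_{i-1}R_i=0$, $L_2U_1=0$, $U_1R_2=0$, $R_{m-1}U_m=0$, $U_mL_{m-1}=0$. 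In other words, the generators of the defining ideal are precisely the composable adjacent-sector products, so both orders vanish and the local claim holds.

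Finally I would dispose of the extended terms. In a left-extended contribution one factors $a_1=a\cdot a_1'$ and reads $(a_1',a_2,\dots,a_n)$ off the graph; applying the argument above to the pair $(a_1',a_2)$ gives $a_1'\cdot a_2=0$, whence $a_1\cdot a_2=a\cdot a_1'\cdot a_2=0$, while the remaining pairs are untouched. The right-extended case is symmetric and affects only the pair $(a_{n-1},a_n)$. The substantive work, and the place where care is needed, is the bookkeeping described in the second paragraph: pinning down --- including in the extended cases and when a region $X(I_i)$ abuts several internal vertices --- that the relevant endpoints of $a_i$ and $a_{i+1}$ really are the two sectors flanking the common edge $e_i$. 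Once that identification is secured, the statement reduces to the finite check of adjacent-sector products carried out above, and the hypothesis $(\vec{w},n)\neq(0,2)$ is exactly what guarantees a genuine graph (with an internal vertex) rather than the bare algebra multiplication $\mu^{0}_2$.
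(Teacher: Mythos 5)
Your proposal is correct and takes essentially the same route as the paper: the paper's (one-sentence) proof rests on exactly the observation you isolate, namely that the last factor of $a(I_i)$ and the first factor of $a(I_{i+1})$ are the two sectors flanking the leaf edge separating $I_i$ from $I_{i+1}$, and that these multiply to zero by the labeling conventions. Your case-by-case check of adjacent-sector products around a node, and the reduction of the extended cases to the centered one, simply fill in the details the paper leaves implicit.
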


\begin{proof}
  This is a straightforward artifact of the manner in which graphs are
  labeled: if $I_i$ and $I_{i+1}$ are two consecutive interval in
  $(\partial D)\setminus (\partial\Gamma)$, then the product of last factor in
  $a(I_{i})$ with the first factor in $a(I_{i+1})$ vanishes.
\end{proof}

\begin{remark}
  The actions $\mu^0_n$ correspond to counting tiling graphs that are trees.
\end{remark}

\begin{lemma}
  \label{lem:ActionsAreEven}
  For each operation tree,
  \[ n=2md-4d+2-2w.\]
  In particular, $\mu^w_n=0$ if $n$ is odd.
\end{lemma}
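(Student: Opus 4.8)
The plan is to read off the identity $n=2md-4d+2-2w$ (where I abbreviate $w=\sum_i w_i$ for the total weight) from the Euler characteristic of the disk, using only the planarity of $\Gamma$ and the fact that every internal vertex has degree $2m-2$. Once this identity is in hand, the evenness of $n$—and hence $\mu^w_n=0$ for odd $n$—is automatic, since every summand on the right-hand side is even.

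First I would record the combinatorial data. Let $d$ be the number of internal vertices and let $\Gamma$ have $L$ leaves. The leaves cut $\partial\CDisk$ into the intervals $I_1,\dots,I_n$, so $L=n$. Viewing $\Gamma\cup\partial\CDisk$ as a CW structure on $\CDisk$, the $0$-cells are the $d$ internal vertices together with the $L$ leaves, the $1$-cells are the $E$ edges of $\Gamma$ together with the $L$ boundary arcs, and the $2$-cells are the $F$ regions of $\CDisk\setminus\Gamma$. Because $\chi(\CDisk)=1$ and the $L$ boundary arcs cancel the $L$ leaves, this gives $d-E+F=1$. Next I would compute $E$: the handshake lemma gives $2E=\sum_v\deg v$, where the internal vertices contribute $(2m-2)d$ and the $n$ leaves contribute $n$, so $2E=(2m-2)d+n$ and $E=(m-1)d+\tfrac n2$. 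Substituting into $d-E+F=1$ gives $F=1+(m-2)d+\tfrac n2$.

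Finally I would split $F=F_{\mathrm{int}}+F_{\mathrm{bdry}}$ into the regions disjoint from $\partial\CDisk$ and those meeting it. By the definition of the weight, every interior region is a Type-$i$ component, so $F_{\mathrm{int}}=\sum_i w_i=w$. The heart of the argument—and what I expect to be the main obstacle—is the claim $F_{\mathrm{bdry}}=n$, i.e.\ that the regions $X(I_1),\dots,X(I_n)$ are pairwise distinct. I would deduce this from connectedness of $\Gamma$ by a Jordan-curve argument: if a single region $\phi$ met $\partial\CDisk$ in two of the intervals, I would join two points of those intervals by an arc $\delta\subset\phi$, which is disjoint from $\Gamma$, and close it up along one of the two boundary arcs $\beta$ between them. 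The resulting simple closed curve meets $\Gamma$ only in the leaves lying on $\beta$, and every edge at such a leaf points into the enclosed side; but the leaves bounding the two intervals lie on the opposite side, so $\Gamma$ would be split into two nonempty pieces with no edge crossing the curve, contradicting connectedness.

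Granting $F_{\mathrm{bdry}}=n$, the Euler relation becomes $w+n=1+(m-2)d+\tfrac n2$, and solving for $n$ gives $n=2md-4d+2-2w$. Since each term on the right is even, $n$ is even, so $\mu^w_n=0$ whenever $n$ is odd. I note that only connectedness of $\Gamma$ is needed for the distinctness claim $F_{\mathrm{bdry}}=n$; the length-one cycle condition enters only through the classification of the interior regions as the monogons and bigons counted by $w$.
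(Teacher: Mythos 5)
Your proof is correct and is essentially the paper's argument: both are the same Euler-characteristic count with the identical edge count $E=(m-1)d+\tfrac n2$ from the degree-$(2m-2)$ condition, the paper phrasing it as $\chi(\Gamma)=V-E=1-w$ for the connected graph while you run $V-E+F=1$ on the disk and split the faces as $F=w+n$ (an algebraically equivalent rearrangement). The one thing you add is an explicit justification, via the Jordan-curve/connectedness argument, that the boundary regions $X(I_1),\dots,X(I_n)$ are pairwise distinct — a point the paper leaves implicit in asserting that the Euler characteristic of $\Gamma$ is $1-w$.
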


\begin{proof}
  The equation follows by considering the underlying graph $\Gamma$
  for each (centered) operation, and noting that its number of
  vertices is $n+d$, its number of edges is $(m-1)d+n/2$, and its
  Euler characteristic is $1-w$.
\end{proof}

Sometimes, it will be useful to think of the left- and right-extended
operations as counts of extended graphs (as
in~\cite{TorusAlg}). 

\begin{defn}
\label{def:ExtendedGraph} A {\em right-extended graph} is a
graph obtained by taking a centered tiling graph, and inserting a
sequence of ($2$-valent) vertices on the edge $e$ connecting to the
root vertex.  Each $2$-valent vertex has two sectors: the one to the
right of $e$ (thought of as oriented from the boundary of the disk to
an internal vertex), which is unlabeled; and the one to the left
of $e$, which is labeled with a basic algebra element. The algebra
elements labels are uniquely determined: they are taken so that the
product of the algebra elements along the edge (as visible from the
boundary; c.f. Definition~\ref{def:AlgebraSequence}) are non-zero.

A {\em left extended graph} is defined analogously, with the
understanding that now the sector to the left each $2$-valent vertex
is unlabeled, while the one on the right is labeled by an
algebra element. 
\end{defn}

\begin{figure}[ht]
  \input{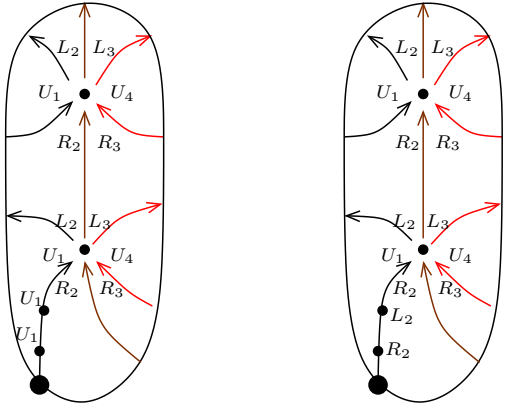}
  \caption{\label{fig:ExtendedTilings} {\bf{Extended graphs.}}}
\end{figure}

Examples of extended graphs are illustrated in Figure~\ref{fig:ExtendedTilings}.
The figure on the left is right extended, representing the operation
\[ \mu_{10}(R_2,R_3,U_4,U_3,U_4,L_3,L_2,U_1,U_2,U_1^3)=U_1^2;\]
while the one on the right is left-extended, representing
\[ \mu_{10}(U_2 R_2,R_3,U_4,U_3,U_4,L_3,L_2,U_1,U_2,U_1)=U_2\Idemp{1}.\]

\section{Verifying the $A_\infty$ relations}
\label{sec:Relation}
Endow $\Clg[t]$ with operations
\[ \mu^{\vec{w}}_n\colon \Clg[t]^{\otimes n}\to \Clg[t] \] as in
Equation~\ref{eq:WeightedAinftyAlgebra}. 

Our aim in this section is to prove the following:

\begin{thm}
  \label{thm:WeightedAinftyAlgebra}
  The operations $\{\mu^{\vec{w}}_n\}$ give $\Clg[t]$ the structure of a unital, weighted $\Ainfty$ algebra.
\end{thm}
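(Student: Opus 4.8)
The plan is to check the three requirements of a weighted $\Ainfty$ algebra — grading, unitality, and the structure relation — in turn, reserving the structure relation for last as the only substantial point. Two preliminary reductions help. Since every $\mu^{\vec w}_n$ is $t$-equivariant, it suffices to verify the structure relation on tensors of algebra elements lying in $\Clg_+$, the $t$-powers factoring through the whole identity. Since $\mu^{\vec w}_n$ was defined to annihilate any tensor containing an idempotent factor once $(n,\vec w)\neq(2,0)$, sequences that include idempotents reduce to the unitality of $\mu^0_2$; so I may assume all inputs are pure elements of $\Clg_+$, where Lemma~\ref{lem:NonMultiplyable} further restricts attention to sequences with $a_i\cdot a_{i+1}=0$ for all $i$.

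The grading constraints are immediate from the two counting lemmas. Lemma~\ref{lem:GradedOperations} gives $m\cdot d(\Gamma)=\sum_i|a_i|+\sum_i w_i$, which says exactly that the output $t^{d(\Gamma)}\iota(\Gamma)$ carries shadow grading $\sum_i|a_i|$, since $|t|=\sum_i\eVec i$ and $|\iota|=0$; the left- and right-extended terms only multiply by boundary factors whose shadow gradings are accounted for identically. Lemma~\ref{lem:ActionsAreEven} gives $n=2md-4d+2-2w$, which rearranges, using $\gr(t)=2m-4$ and $\gr(\Clg(m,1))=0$, into the required degree $\gr(\mu^{\vec w}_n)=n-2+2\sum_i w_i$. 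Unitality holds by construction: $\mu^0_2$ is the unital multiplication on $\Clg(m,1)$ with unit $\Idemp 1+\dots+\Idemp{m-1}$, and the higher operations were arranged to vanish on idempotent inputs.

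For the structure relation I would follow the method of \cite[Section~3]{TorusAlg}. Each summand of the left-hand side, an outer operation $\mu^u$ with an inner operation $\mu^v$ plugged into the $i$-th slot, is read geometrically: the (possibly extended) tiling graph computing $\mu^v$ is glued, along its root edge, into the boundary interval of the (possibly extended) tiling graph computing $\mu^u$ that carries the $i$-th input. The resulting planar decorated graph $\widehat\Gamma$ satisfies all the conditions of a centered or extended tiling graph \emph{except} that it carries a single distinguished cycle of length two — the minimal configuration forbidden by Definition~\ref{def:CenteredTilingGraph}. Conversely, every such once-defective graph, carrying the same algebra sequence, weight vector, idempotent and exponent $d$, arises from the relation in exactly this way. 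The weighted $\mu^{\eVec i}_0=U_i$ insertions fit the same picture as the degenerate inner graphs that cap off a bounded region of type $i$, trading a monogon or bigon for a pair of boundary intervals, and must be bookkept alongside the genuine edge-gluings.

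The cancellation is then effected by a fixed-point-free involution on the set of once-defective graphs: the distinguished length-two cycle can be split, separating its two offending internal vertices, in exactly two ways, realizing $\widehat\Gamma$ as an (inner, outer) composition in the two orders. Because the two resolutions share the same total algebra sequence, weight, idempotent and $t$-power, the corresponding terms are equal and cancel over $\Field=\Zmod 2$. The main obstacle is making this involution precise and proving it has no fixed points and omits no term across every boundary case: the interaction with left- and right-extended operations, where the defect lies on the root edge and the split must respect the extension; the $\mu_0$-capping contributions, where one resolution degenerates a region rather than an edge; and the presence of non-pure inputs such as $U_i=R_iL_i+L_iR_i$, which must be checked not to leave any orphan terms. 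This delicate combinatorial bookkeeping is the technical core of the argument, and is carried out exactly as in \cite[Section~3]{TorusAlg}.
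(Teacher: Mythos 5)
Your preliminary reductions contain one error that matters: you cannot restrict to input sequences with $a_i\cdot a_{i+1}=0$ for all $i$. Lemma~\ref{lem:NonMultiplyable} constrains a \emph{single} nonvanishing operation, but the $\Ainfty$ relation also contains terms whose inner node is the ordinary product $\mu^0_2(a_i,a_{i+1})$, and those terms are nonzero precisely when some consecutive product is nonzero. The paper stratifies by the number $\ell$ of consecutive pairs with nonvanishing product: for $\ell\geq 2$ the relation is vacuous by Lemma~\ref{lem:NonMultiplyable}, but the case $\ell=1$ is nontrivial and is exactly where the terms of type $(C2)$, $(L2)$, $(R2)$ live and must be cancelled against composites of two higher operations. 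Your reduction silently discards this entire case. (Your verification of the grading constraints via Lemmas~\ref{lem:GradedOperations} and~\ref{lem:ActionsAreEven}, and of unitality, is fine.)

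More seriously, the cancellation mechanism you propose is not the one that works. Gluing the root edge of the inner graph into the boundary interval of the outer graph does not produce a graph with a distinguished length-two cycle: it merges two boundary leaves into a single internal edge, and in the unweighted case the result is still a tree, i.e.\ a perfectly valid centered tiling graph. The datum recording the composition is a marked internal edge reached by an arc from the boundary, not a forbidden cycle, and the cancelling pairs are generally \emph{not} the two orders of the same (inner, outer) pair: a term of type $(C2)$ --- an inner $\mu^0_2$ feeding a centered operation --- cancels against a composite $[C*]$ of two higher operations or, in the weighted case, against a $[C0]$-term in which a $\mu^{\eVec{i}}_0=U_i$ caps a monogon or bigon (Lemmas~\ref{lem:CancelC2w0} and~\ref{lem:CancelC2}). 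The actual involution is ``pull a marked internal edge out to the boundary'' versus ``push a boundary arc in,'' supplemented by root-moving moves for the configurations in which the separating arc abuts the root (Lemmas~\ref{lem:CLpCLmw0}, \ref{lem:CLpCLm}, \ref{lem:CancelL0pR0m}) and by the nine-case analysis on the relative sizes of $|a|$, $|a_1|$, $|a_n|$ in Lemmas~\ref{lem:LLmRRpw0} and~\ref{lem:LLmRRp}. Since your involution starts from the wrong combinatorial object, deferring ``the delicate bookkeeping'' to \cite[Section~3]{TorusAlg} cannot rescue it; that bookkeeping --- the classification into types $(C2)$, $[C*]$, $[C0*]$, $(L2)$, $(R2)$, $(2*\pm)$, $[LL*],\dots,[R0*]$ and the explicit pairing of each type with its partner --- is the content of the paper's proof.
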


Theorem~\ref{thm:WeightedAinftyAlgebra} states that sums of
double-composite trees vanish.  We give a combinatorial proof in the
spirit of~\cite[Section~3.3]{TorusAlg}.  The key point is to give a
fixed point free involution on the set of terms that count terms in
the $\Ainfty$ relation (preserving the inputs and the outputs),
thereby specifying pairs of terms that cancel. 

Continuing as before, we are working in characteristic $2$. See Theorem~\ref{thm:WeightedAinftyAlgebraZ} for the corresponding result over $\Z$.

\subsection{The unweighted case}

We separate out a special case where $\vec{w}=0$; equivalently, where
we restrict to counting those (centered) tiling graphs that are trees.
Although this is a special case of Theorem~\ref{thm:WeightedAinftyAlgebra},
we state it as a separate result, which we prove in this subsection:

\begin{thm}
  \label{thm:unWeightedAinftyAlgebra}
  The operations $\{\mu^{0}_n\}$ give $\Clg[t]$ the structure of
  a unital $\Ainfty$ algebra. 
\end{thm}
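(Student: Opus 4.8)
The plan is to specialize the combinatorial template of~\cite[Section~3.3]{TorusAlg} to the case $\vec w=0$, where every contributing tiling graph is a tree and therefore has no interior faces; this removes all monogon/bigon bookkeeping and leaves a pure combinatorics of trees and their extensions. First I would expand the structure relation
\[\sum_{1\leq i\leq j\leq n+1}\mu^0_{n-j+i+1}(a_1,\dots,a_{i-1},\mu^0_{j-i}(a_i,\dots,a_{j-1}),a_j,\dots,a_n)=0\]
into its individual terms and read each as a count of a two-level operation tree: an outer tree computing $\mu^0_{p}$ with $p=n-j+i+1$, whose $i$-th input slot is fed by the output of an inner tree computing $\mu^0_{j-i}$. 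Using the decomposition of each $\mu^0_n$ into centered, left-extended, and right-extended pieces, together with the extended-graph formalism of Definition~\ref{def:ExtendedGraph}, I would encode every such term uniformly as a single (possibly extended) tiling tree carrying a distinguished \emph{gluing edge} $e$, the internal edge along which the inner and outer trees meet; cutting $e$ recovers the ordered pair of operations.

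Before building the involution I would record the constraints that prune the relevant terms. Lemma~\ref{lem:ActionsAreEven} forces all arities to be even, and Lemma~\ref{lem:NonMultiplyable} forces consecutive inputs to have vanishing product, so the admissible trees are rigidly constrained. The decisive structural fact is that a \emph{centered} operation outputs an idempotent (times a power of $t$); by the vanishing clause accompanying~\eqref{eq:WeightedAinftyAlgebra}, feeding such an output into an operation with $(\vec w,p)\neq(0,2)$ kills the term. Hence a centered inner operation survives only when the outer operation is the plain multiplication $\mu^0_2$, whereas every nonvanishing composition of two genuine ($d\geq 1$) tree operations is mediated by an \emph{extension} that peels a factor off an extremal input $a_i$ or $a_{j-1}$ and transports it across $e$. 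Reconciling these two kinds of term is exactly what the involution must accomplish.

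Next I would construct a fixed-point-free involution $\tau$ on the resulting set of composite terms, preserving both the input sequence $(a_1,\dots,a_n)$ and the output. Given a composite tree with distinguished gluing edge $e$, the involution relocates the cut to a canonically determined neighboring edge: the sector labels at the two endpoints of $e$ single out exactly one alternative admissible decomposition --- equivalently, one alternative choice of where to split off a factor through an extension, or where to re-associate through a $\mu^0_2$ --- yielding a second legal term with the same inputs and output. I would then verify that $\tau$ is genuinely an involution, that its image always consists of legal terms of the relation, and, crucially, that it has no fixed points; since the ground ring has characteristic $2$, the matched terms cancel in pairs and the sum vanishes. Unitality is separate and immediate: $\mu^0_2$ is the algebra multiplication of $\Clg(m,1)$, for which $1=\Idemp{1}+\dots+\Idemp{m-1}$ is a two-sided unit, while the same vanishing clause gives strict unitality for all higher $\mu^0_n$.

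The hard part will be the construction and fixed-point-freeness of $\tau$ at the degenerate terms --- those in which one operation is the multiplication $\mu^0_2$ (the case $d=0$), or in which an extension peels a factor off the extremal inputs $a_1$ or $a_n$. These are precisely the configurations where the associativity of $\Clg(m,1)$ and the idempotent-valued outputs of the centered operations interact, so that a ``genuine interior-edge'' term must be paired with a ``$\mu^0_2$-or-extension'' term; checking that this pairing is consistent, exhausts all terms, and leaves nothing fixed is the delicate bookkeeping on which the whole argument rests.
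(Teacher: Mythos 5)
Your overall strategy---expand the structure relation into two-level composite terms, encode them graphically, and cancel them in pairs via a fixed-point-free involution over characteristic $2$, with unitality handled separately by the vanishing clause---is exactly the strategy of the paper, and your preliminary observations (Lemma~\ref{lem:ActionsAreEven}, Lemma~\ref{lem:NonMultiplyable}, and the fact that a centered inner operation outputs an element of $\IdempRing[t]$ and hence dies unless the outer node is $\mu^0_2$) are all correct and are all used there. The problem is that the involution $\tau$, which is the entire content of the proof, is never actually constructed: you assert that ``the sector labels at the two endpoints of $e$ single out exactly one alternative admissible decomposition,'' and then concede in your final paragraph that the degenerate cases are ``the delicate bookkeeping on which the whole argument rests.'' That bookkeeping is the proof, and your uniform encoding is too coarse to support it.

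Concretely, two things go wrong with the ``single tiling tree with a distinguished gluing edge $e$'' picture. First, when the inner operation is itself left- or right-extended (the terms the paper calls $[C*]$, $[LL*]$, $[LR*]$, $[RL*]$, $[RR*]$), the composite is represented not by one tree with a marked internal edge but by a \emph{two-component} planar graph $\Gamma_1\cup\Gamma_2$ whose components meet only along a boundary arc $S$; there is no internal gluing edge to relocate. Second, and consequently, the cancelling partner of a term is not always found by ``moving the cut to a neighboring edge.'' The paper needs three qualitatively different moves: pulling an internal edge out to the boundary along a path and pushing a boundary arc back in (Lemma~\ref{lem:CancelC2w0}, Lemma~\ref{lem:CancelL2w0}); exchanging which of two adjacent boundary vertices is the root of $\Gamma_1$ versus $\Gamma_2$ (Lemma~\ref{lem:CLpCLmw0}, Lemma~\ref{lem:RLmLRpw0}); and, for the residual terms $[LL-]$, $[RR+]$, $(2C\pm)$, $(2L+)$, $(2R-)$, a nine-case matching governed by comparing $|a|$ with $|a_1|$ and $|a_n|$, in which one pushes in one edge and then either pulls out a different edge or moves the root to the next available leaf (Lemma~\ref{lem:LLmRRpw0}). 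Which move applies, and which excluded subtypes (e.g.\ $[CL-]\cup[CR+]$) must be peeled off and paired by a different mechanism, is precisely the case analysis your proposal defers. Without specifying the involution on each of these classes and checking that the images are legal terms with the same inputs and output, the argument does not close.
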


\begin{cor}
  The operations $\mu^0$ endow $\Clg[t]$ with the non-trivial
  $\Ainfty$ structure specified in Theorem~\ref{thm:UniqueCm}.
\end{cor}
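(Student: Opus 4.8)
The plan is to \emph{identify} the structure $\mu^0$ with the one characterized in Theorem~\ref{thm:UniqueCm} by checking that $\mu^0$ satisfies every hypothesis of that theorem and then invoking its uniqueness clause. Concretely, I must verify three things: that $(\Clg[t],\mu^0)$ is a \emph{graded} $\Ainfty$ algebra in the sense of \eqref{eq:GradedHypotheses}; that it extends the natural algebra structure on $\Clg(m,1)$; and that its $\mu_{2m-2}$ operation is non-trivial. Once these are in place, the uniqueness assertion of Theorem~\ref{thm:UniqueCm} forces $(\Clg[t],\mu^0)$ to be quasi-isomorphic to the distinguished non-trivial structure, which is exactly the content of the corollary.

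That $(\Clg[t],\mu^0)$ is a unital $\Ainfty$ algebra is precisely Theorem~\ref{thm:unWeightedAinftyAlgebra}, so only the grading identities \eqref{eq:GradedHypotheses} remain. For the shadow grading I would refine the bookkeeping of Lemma~\ref{lem:GradedOperations} to the level of individual components: summing the shadow gradings of the $2m-2$ sectors around a single node yields $\sum_{i=1}^m\eVec{i}=|t|$ as a vector, since $U_1\goesto\eVec1$, $U_m\goesto\eVec m$, and $L_i,R_i$ each contribute $\tfrac12\eVec i$. In a tree every sector lies in a region meeting $\partial\CDisk$, so summing over the $d$ nodes gives $\sum_j|a_j|=d\cdot|t|=|t^{d}|$ for a centered term, while the left- and right-extended terms match by additivity of $|\cdot|$ under multiplication. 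For the $\gr$-grading, Lemma~\ref{lem:ActionsAreEven} with $w=0$ gives $n=2d(m-2)+2$, whence $\gr(t^{d})=d(2m-4)=n-2$; since every input lies in $\Clg_+$ with $\gr=0$, this is exactly the required $\gr(\mu^0_n)=n-2+\sum_i\gr(a_i)$.

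For the remaining two hypotheses: $\mu^0_2$ is induced from the underlying product by construction, and for $m>2$ Lemma~\ref{lem:ActionsAreEven} forces $d=0$ in degree $n=2$, so no powers of $t$ can enter $\mu^0_2$; hence $\mu^0$ extends the algebra structure of $\Clg(m,1)$. To see that $\mu_{2m-2}$ is non-trivial I would exhibit the centered tiling graph $\Gamma$ with a single internal vertex, all $2m-2$ of whose edges run to the boundary: here $d(\Gamma)=1$, the boundary sectors read off the sequence $\vec a(\Gamma)$ of the $2m-2$ sector-labels, and $\mu^0_{2m-2}(\vec a(\Gamma))=t\cdot\iota(\Gamma)\neq 0$.

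The corollary then follows at once from the uniqueness in Theorem~\ref{thm:UniqueCm}. There is no serious obstacle here, since the substantive content — that the $\Ainfty$ relations hold — is already carried by Theorem~\ref{thm:unWeightedAinftyAlgebra}; the only care required is the component-wise verification of the shadow grading (Lemma~\ref{lem:GradedOperations} records only the total grading) together with the confirmation that the exhibited single-vertex graph yields a non-zero $\mu_{2m-2}$, so that the uniqueness clause distinguishing the non-trivial structure from the trivial one actually applies.
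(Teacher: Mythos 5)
Your proposal is correct and follows essentially the same route as the paper: invoke Theorem~\ref{thm:unWeightedAinftyAlgebra} for the $\Ainfty$ relations, verify the grading hypotheses via Lemma~\ref{lem:GradedOperations} (and Lemma~\ref{lem:ActionsAreEven}), exhibit the single-vertex centered graph to get a non-trivial $\mu_{2m-2}$, and conclude by the uniqueness clause of Theorem~\ref{thm:UniqueCm}. Your only addition is the observation that the shadow-grading identity must be checked componentwise rather than just in total, a point the paper's proof passes over by citing Lemma~\ref{lem:GradedOperations} directly; this is a harmless refinement, not a different argument.
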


Assuming Theorem~\ref{thm:unWeightedAinftyAlgebra}, the above corollary
follows quickly, as follows:

\begin{proof}    
  Theorem~\ref{thm:unWeightedAinftyAlgebra} ensures that
  $\mu^0$ is an $\Ainfty$ algebra. 
  Lemma~\ref{lem:GradedOperations}
  ensure that the grading
  conditions required by Theorem~\ref{thm:UniqueCm} hold.
  It is straightforward to compute
  \[ \mu_{2m-2}(U_1,R_2,\dots,R_{m-1},U_m,L_{m-1}\dots,L_2)=\Idemp{1}t.\]
  (This is a centered graph with a single vertex.)  Thus,
  Theorem~\ref{thm:UniqueCm} ensures the stated corollary.
\end{proof}

The proof of Theorem~\ref{thm:unWeightedAinftyAlgebra}
will occupy the rest of the present subsection.

\begin{defn}\label{def:CenteredCompositeTypes}
Non-trivial composite trees with inputs in $\Clg_+$
and output in $\IdempRing[t]$ can be of the following {\em centered composite types}:
\begin{itemize}
\item
A node labeled $\mu_2$ on the top and a tree contributing to
  $\mu_n$ on the bottom.  We call these terms of type $(C2)$.
\item
  A node with $\mu_n$ on the bottom and a left extended $\mu_{\ell}$
  on the top. We call these terms of type $[CL]$. Further subdivide
  these into $[CL+]$, when the action on the top level feeds into
  the leftmost input on the bottom; $[CL-]$ when it feeds into the
  rightmost input on the bottom; and $[CLg]$ when it feeds into any
  other term.
\item
 A node with (centered) $\mu_n$ on the bottom and a right extended
  $\mu_{\ell}$ on the top. We call these $[CR]$. Further
  subdivide these into $[CR+]$, $[CR-]$, and $[CRg]$ according to
  whether the top action feeds into the the leftmost, rightmost, or
  other term on the bottom. 
\end{itemize}
\end{defn}

(We are following here the notational conventions
from~\cite[Section~3.3]{TorusAlg}.)
Terms of type $[CL]$ or $[CR]$ are referred to simply as terms of type $[C*]$.

Graphically, terms of Type $(C2)$ are represented by a centered tiling
graph, together with a (dotted) path connecting $(\partial D)\setminus
(\partial\Gamma)$, that separates the two sectors where the algebra
elements are multiplied. For example, the pictures on the left of Figure~\ref{fig:PullingOutEx} represent the terms of type $(C2)$ contributing to
\[ \mu_8(U_1,U_2,U_1^2,R_2,U_3,U_2,\mu_2(U_3,U_3),L_2)=t^3 \Idemp{1} \]
and
\[ \mu_8(U_1,\mu_2(R_2,L_2),U_1^2,R_2,U_3,U_2,U_3^2,L_2)=t^3\Idemp{1}\] 
respectively.

\begin{figure}[ht]
\input{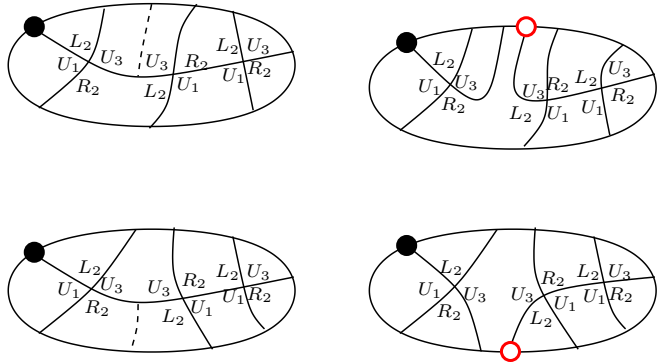}
\caption{\label{fig:PullingOutEx} {\bf{Terms of type $(C2)$ and $[C*]$}}}
\end{figure}

Terms of type $[C*]$ correspond to counts of (cycle-free) rooted, planar,
two-component graphs whose two components are ordered $\Gamma_1$ and
$\Gamma_2$ (``bottom'' and ``top'' respectively) -- with the
additional hypothesis that the root of $\Gamma_2$ is adjacent along
the boundary to a leaf or root of $\Gamma_1$, subject to the following
labeling constraint. Choose an arc $S$ that connects the root of
$\Gamma_2$ with the nearest point in $\Gamma_1$.  The algebra element
marking the first sector before $S$ (with respect to the counterclockwise
ordering) must multiply with the the algebra element marking the first
sector after $S$.

To read off the sequence of algebra elements, we use the following
convention. Draw an arc in $\partial D$ from the root of $\Gamma_2$
to the nearest point in $\Gamma_1$. Denote that arc $S$.  Now, we read
off the sequence of algebra elements as in
Definition~\ref{def:AlgebraSequence}, with the convention that the
intervals now are the components of intervals of $\partial D\setminus
(\Gamma\cap \partial D)\setminus S$. 

\begin{defn}
  \label{def:CompositePattern}
These graphical representations of terms of type $[C*]$ are called
{\em (centered) composite patterns}.
\end{defn}

Examples are given on the right in 
Figure~\ref{fig:PullingOutEx}. In the pictures, the
ordering on $\Gamma_1$ and $\Gamma_2$ is indicated by drawing the root
vertex of $\Gamma_1$ as a solid black dot and the root of $\Gamma_2$ as a hollow
(red) dot.
These illustrate the terms of type $[C*]$
\[ \mu_4(U_1,\mu_6(U_2,U_1^2,R_2,U_3,U_2,U_3),U_3,L_2)=\Idemp{1} t^3 \]
and
\[ 
\mu_4(U_1,R_2,\mu_6(L_2,U_1^2,R_2,U_3,U_2,U_3^2),L_2)=\Idemp{1} t^3 \]
respectively.

Note that the precise type -- $[CL]$ or $[CR]$ -- of the diagram
representing a term of type $[C*]$ can be seen in the planar geometry
of the picture, as follows. Consider the two-component planar graph
$\Gamma_1\cup \Gamma_2$ representing the pair of operations. Travel
the boundary $\partial D$ counterclockwise starting at the root vertex of
$\Gamma_1$. If the first vertex in $\Gamma_2$ encountered is the root
of $\Gamma_2$ (as in the bottom right picture in Figure~\ref{fig:PullingOutEx}),
then the operation containing $\Gamma_2$ is right-extended;
otherwise, the operation containing $\Gamma_2$ is left-extended.

\begin{lemma}
  \label{lem:CancelC2w0}
  With weight $0$,
  terms of type $(C2)$ cancel against terms of type 
  $[C*]\setminus [CL-]\cup[CR+]$.
\end{lemma}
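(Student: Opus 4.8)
The plan is to exhibit an explicit, input- and output-preserving bijection between the set of type $(C2)$ terms and the set of type $[CL+]$, $[CLg]$, $[CR-]$, $[CRg]$ terms, so that over $\Field=\Zmod 2$ the paired contributions cancel. Both families are already encoded as decorated graphs: a $(C2)$ term is a single connected tree $\Gamma$ with a dotted arc marking the $\mu_2$-multiplication of two consecutive inputs, while a $[C*]$ term is a two-component pattern $\Gamma_1\sqcup\Gamma_2$ with the gluing arc $S$ (Definition~\ref{def:CompositePattern}). The bookkeeping already lines up: by Lemma~\ref{lem:ActionsAreEven} the single tree of a $(C2)$ term and the disjoint pair $(\Gamma_1,\Gamma_2)$ of the corresponding $[C*]$ term carry the same total number of internal vertices, hence the same power of $t$, and by construction the same total input sequence and the same output idempotent. (This can be checked directly on the matched examples under Figure~\ref{fig:PullingOutEx}, which share all nine inputs and the output $t^3\Idemp{1}$.)

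The bijection is an edge-cut / edge-glue move. Given a $(C2)$ term, the dotted arc sits at the junction between the last factor of the $i$-th input and the first factor of the $(i{+}1)$-st; these two consecutive sectors of the relevant region are separated by a single interior edge $e$ of $\Gamma$. I would cut $e$: this splits the tree into two trees, the component containing the root of $\Gamma$ becoming $\Gamma_1$ and the other becoming $\Gamma_2$. The two resulting half-edges become a new boundary leaf of $\Gamma_1$ and the root of $\Gamma_2$; the factor on the $\Gamma_2$-side of the old $\mu_2$ is ``pulled out'' onto the extension, so that the former $\mu_2$-multiplication becomes exactly the multiplication across $S$. The inverse glues the root of $\Gamma_2$ to the adjacent leaf of $\Gamma_1$, restoring $e$ and the dotted $\mu_2$-mark. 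Cutting an edge of a tree always yields two trees, and gluing two trees by one new edge yields a tree, so the weight stays $0$ (the connected, length-one-cycles condition of Definition~\ref{def:CenteredTilingGraph}) in both directions.

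It then remains to characterize the image. The glue step is legal precisely when the point of $\Gamma_1$ nearest the root of $\Gamma_2$ is an honest leaf, i.e.\ an input slot, so that $e$ reconnects at an input and the dotted arc marks a $\mu_2$ of two consecutive inputs. The forbidden case is when that nearest point is the root of $\Gamma_1$ itself, the global output: there the putative $\mu_2$ would straddle the output rather than two inputs, and no $(C2)$ partner exists. Using the counterclockwise-travel criterion recorded just after Definition~\ref{def:CompositePattern}, the root of $\Gamma_2$ abuts the root of $\Gamma_1$ exactly for a right-extended $\Gamma_2$ feeding the leftmost input (type $[CR+]$) and for a left-extended $\Gamma_2$ feeding the rightmost input (type $[CL-]$). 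Hence the move is a bijection onto $[C*]\setminus([CL-]\cup[CR+])$, and the matched pairs cancel.

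The main difficulty will be the local verification that cut and glue are genuinely mutually inverse. One must check that ``pulling out'' the $\Gamma_2$-side factor onto the extension reproduces exactly the extension labels forced by Definition~\ref{def:ExtendedGraph} (uniquely determined by non-vanishing of the boundary product), and that the sector-to-algebra dictionary of Definition~\ref{def:AlgebraSequence} matches on the nose across the arc $S$, with the same idempotent read off on either side. The secondary point requiring care is the root-adjacency case analysis isolating $[CL-]$ and $[CR+]$ from the other four subtypes; this is, however, controlled entirely by the planar position of the root of $\Gamma_2$ relative to the root of $\Gamma_1$, so I expect it to reduce to the clean dichotomy above.
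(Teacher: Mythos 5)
Your proposal is correct and is essentially the paper's own argument: the paper also pairs a $(C2)$ term (a centered tree with a dotted arc to an interior edge $e$) with the two-component pattern obtained by pulling $e$ out to the boundary along that arc, notes that the weight-zero (tree) hypothesis guarantees the result has exactly two components, inverts by pushing the arc $S$ back in, and identifies the failure cases of the inverse---$S$ joining the two roots---as precisely the excluded types $[CL-]$ and $[CR+]$. (One small quibble: the equality of $t$-powers follows simply because cutting an edge does not change the set of internal vertices, not from Lemma~\ref{lem:ActionsAreEven}.)
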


\begin{proof}
  Terms of type $(C2)$ correspond to a centered tiling graph $\Gamma$ and a
  path $P$ that connects $\partial D$ to some internal edge $e$ in
  $\Gamma$.  

  The cancelling term of type $[C*]$ is obtained by pulling the edge
  $e$ out to the boundary along $P$, and letting $S$ denote the newly
  created path in $\partial D$. Since the original graph $P$ has no
  loops (this is the weight zero hypothesis), the newly created graph
  has two components $\Gamma_1\cup \Gamma_2$.  Label $\Gamma_1$ so
  that it contains the root vertex. The root vertex of $\Gamma_2$,
  then is the point in $\Gamma_2$ that meets $S$. See
  Figure~\ref{fig:PullingOut}.  We have specified a map from terms of
  type $(C2)$ to terms of type $[C*]$.  Observe that for the terms
  obtained in this manner, the arc $S$ never connects the two root
  vertices (since the newly created vertex of $\Gamma_1$ is guaranteed
  not to be a root). The terms where $S$ connects the two root vertices
  are precisely those that represent $[CL-]$ and $[CR+]$.

\begin{figure}[ht]
\input{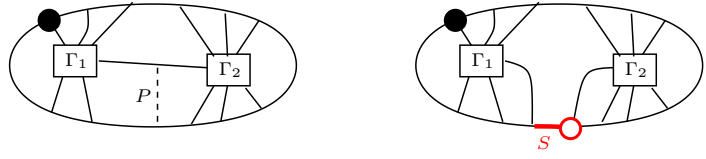}
\caption{\label{fig:PullingOut} {\bf{Cancelling of terms of type
      $(C2)$.}}  Factoring an input to a centered action corresponds
  to an arc (dashed) connecting an internal edge to the
  boundary. Pulling that out to the boundary gives a cancelling term, as shown.}
\end{figure}

The inverse to this ``pulling out'' operation is obtained by pushing
the path $S$ into the interior. This pushing in operation fails to
give a valid rooted tree in the special case where $S$ connects the roots of
$\Gamma_1$ and $\Gamma_2$; i.e. terms of type $[CL-]$ and $[CR+]$.

  The two rows of Figure~\ref{fig:PullingOutEx} illustrate the two relations
  \begin{align*}
    \mu_8(U_1,U_2,U_1^2,R_2,U_3,U_2,\mu_2(U_3,U_3),L_2) 
    &=
    \mu_4(U_1,\mu_6(U_2,U_1^2,R_2,U_3,U_2,U_3),U_3,L_2) \\
    \mu_8(U_1,\mu_2(R_2,L_2),U_1^2,R_2,U_3,U_2,U_3^2,L_2)
    &=
    \mu_4(U_1,R_2,\mu_6(L_2,U_1^2,R_2,U_3,U_2,U_3^2),L_2)
  \end{align*}
\end{proof}

Lemma~\ref{lem:CancelC2w0} leaves terms of type $[CL-]\cup[CR+]$ unaccounted for. These terms are paired off in the following:

\begin{lemma}
  \label{lem:CLpCLmw0}
  With weight $0$,
  terms of type $[CL-]$ cancel against terms of type $[CR+]$.
\end{lemma}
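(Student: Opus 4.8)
The plan is to cancel the two remaining families of terms by a single fixed‑point‑free involution that interchanges $[CL-]$ and $[CR+]$ while preserving the input sequence, the output, and the weight; since we are in characteristic $2$, paired terms then cancel, which is exactly the assertion of the lemma.

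First I would recall, from the proof of Lemma~\ref{lem:CancelC2w0}, the geometric picture: a term of type $[CL-]$ or $[CR+]$ is a composite pattern (Definition~\ref{def:CompositePattern}) consisting of an ordered pair $(\Gamma_1,\Gamma_2)$ of centered tiling trees embedded disjointly in the disk, with the distinguishing feature that the connecting arc $S$ joins the two \emph{root} vertices $r_1$ and $r_2$ (equivalently, $r_1$ and $r_2$ are adjacent along $\partial D$). This root–root adjacency is precisely what separates $[CL-]\cup[CR+]$ from the other types in $[C*]$, all of which were already cancelled against $(C2)$ in Lemma~\ref{lem:CancelC2w0}.

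Next I would define the involution $\tau$ to swap the ordering of the two components, $\tau(\Gamma_1,\Gamma_2)=(\Gamma_2,\Gamma_1)$; equivalently, $\tau$ re-designates the adjacent vertex $r_2$ as the global root. Because $r_1$ and $r_2$ are joined by the single arc $S$, the swapped pair is again a valid composite pattern: the new top's root $r_1$ is adjacent to the new bottom $\Gamma_2$ along the same arc $S$, and the multiplication constraint across $S$ (the sector just before $S$ multiplies with the sector just after) is symmetric in the two components, hence preserved. The map $\tau$ is manifestly an involution, and it is fixed-point free since it moves the global root from $r_1$ to $r_2\neq r_1$. To see that $\tau$ exchanges the two subtypes, I would invoke the planar characterization recorded just before Lemma~\ref{lem:CancelC2w0}: a component is right-extended exactly when the first of its vertices met, travelling $\partial D$ counterclockwise from the root of the bottom component, is its own root, and left-extended otherwise. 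Swapping the roles of the two components flips this test and simultaneously converts ``feeds the rightmost input'' into ``feeds the leftmost input,'' so $\tau$ carries $[CL-]$ to $[CR+]$ and back.

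Finally I would verify that $\tau$ preserves the data a cancelling pair must share. The underlying graph $\Gamma_1\cup\Gamma_2$ is literally unchanged, so the total number of internal vertices $d$, and hence the power $t^{d}$, is preserved, and the weight is $0$ on both sides by hypothesis. The essential point is that reading off the algebra sequence of the re-rooted pattern (Definition~\ref{def:AlgebraSequence}, cutting at $S$) returns the same ordered list $(a_1,\dots,a_n)$: since the geometric configuration and the arc $S$ are unchanged and only the starting vertex has moved to the adjacent root $r_2$ across $S$, the counterclockwise traversal reproduces the same blocks of inputs in the same order, still beginning with $a_1$; as the input list—in particular its first entry—is unchanged, the output idempotent, being determined by the inputs, agrees as well. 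I expect the main obstacle to lie exactly here: one must check sector-by-sector that the algebra element straddling $S$, namely the factor created by the left- or right-extension, is split and reassembled consistently under the change of root, so that no input is shifted or merged and the distinction between ``left-extended feeding the rightmost slot'' and ``right-extended feeding the leftmost slot'' is reconciled without an off-by-one discrepancy. Granting this reading-invariance, each $[CL-]$ term equals its $\tau$-image of type $[CR+]$, and the two cancel over $\Field$.
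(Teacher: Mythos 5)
Your proposal is correct and follows essentially the same route as the paper: both represent the $[CL-]$ and $[CR+]$ terms as two-component composite patterns whose roots are adjacent along $\partial D$, and realize the cancelling involution by switching the ordering (i.e.\ the root designation) of the two components. The additional checks you flag — that the swap is fixed-point free, exchanges the two subtypes via the planar characterization, and preserves the input sequence, weight, and output — are exactly the details the paper leaves implicit.
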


\begin{proof}
  As in the proof of Lemma~\ref{lem:CancelC2w0}, we represent these
  terms as planar graphs $\Gamma$ with two components $\Gamma_1$ and
  $\Gamma_2$, so that the roots of $\Gamma_1$ and $\Gamma_2$ are
  adjacent in $\partial D$. The involution is now realized by
  switching ordering of the two roots.  See Figure~\ref{fig:CLpCRm}
  for a schematic illustration. See also Figure~\ref{fig:CLpCRmEx} for
  an explicit example, illustrating the relation:
  \[\mu_6(L_2,U_1^2,R_2,U_3,U_2,\mu_4(U_3^2,L_2,U_1,R_2))=
  \mu_4(\mu_6(L_2,U_1^2,R_2,U_3,U_2,U_3^2),L_2,U_1,R_2)\]

\begin{figure}[ht]
\input{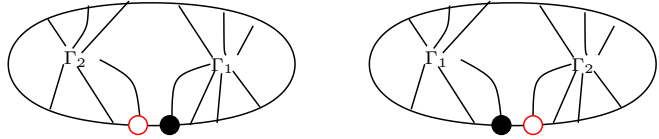}
\caption{\label{fig:CLpCRm} {\bf{Cancelling of terms of type
      $[CL-]$ and $[CR+]$.}}}
\end{figure}
\begin{figure}[ht]
\input{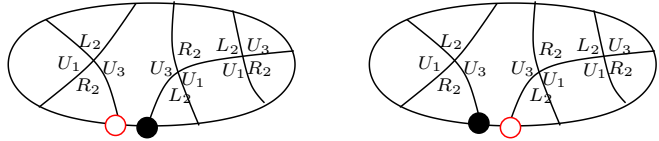}
\caption{\label{fig:CLpCRmEx} {\bf{Cancelling of terms of type
      $[CL-]$ and $[CR+]$: example.}}}
\end{figure}
\end{proof}

It is not difficult to see that we have verified the $\Ainfty$
relation for terms that land in $\IdempRing[t]\subset \Clg[t]$.

\begin{defn}
  \label{def:ExtendedComposite}
  Non-trivial composite trees with inputs in $\Clg_+$ and output in
  $\Clg[t]\setminus\IdempRing[t]$ can be classified into the following kinds of
  {\em extended composite types}:
\begin{itemize}
\item A node labeled $\mu_2$ on the top and a left- or right-extended tree on the bottom. These terms are labeled $(L2)$ and $(R2)$ respectively.
  As usual, we subdivide these further according to where the $\mu_2$ is fed into;
  e.g. for $(L2+)$, the $\mu_2$ is fed into the first input to the lower node.
\item A node with a $\mu_2$ on the bottom and $\mu_n$ on the top.
  These are labeled $(2*+)$, when the $\mu_n$ is fed into the first term in $\mu_2$, and $(2*-)$, when the $\mu_n$ is fed into the second. Here,
  $*\in \{L,C,R\}$, according to whether it is left-extended,
  centered, or right-extended.
\item A node with a left- or right-extended $\mu_n$ on the bottom and
  a $\mu_\ell$ with $\ell>2$ on the top. These are labeled $[LL*]$,
  $[LR*]$, $[RL*]$, or $[RR*]$ (the first resp. second letter
  indicates whether the bottom resp. top node is left- or
  right-extended. The element $*$ can be $-$, $g$, or $+$. The label
  $*=-$ indicates that the top node is channelled into the rightmost
  input of the bottom node; the label $*=g$ indicates that the top
  node is channelled into the input which is neither
  leftmost or rightmost (i.e. it is ``general''); while $*=+$
  indicates that the top node is channelled into the leftmost input
  node of the bottom node.
\end{itemize}
\end{defn}

It will be helpful to have a graphical representation of these terms.
Terms of type $(2*+ )$ are simply represented by the tree representing
the $\mu_n$ operation (which can now be extended, as in
Definition~\ref{def:ExtendedGraph}) and the algebra input that feeds
into (right in) the $\mu_2$. We draw the algebra element to the right
of the tree.  Terms of type $(2*-)$ are represented analogously; except in that
case, the algebra element is drawn to the left of the tree.

For example,  the picture on the right of the middle row
of Figure~\ref{fig:LLmEx} represents a term of type $(2C+)$ of the form
$\mu_2(\mu_6(U_2,U_1,R_2,U_3^2,L_2,U_1),R_2)$.

To represent terms of the third type -- $[LL*]$, $[LR*]$, $[RL*]$, or
$[RR*]$ -- extend the notion of composite patterns from
Definition~\ref{def:CompositePattern}, in the spirit of 
Definition~\ref{def:ExtendedGraph}:

\begin{defn} 
  A {\em left-} resp. {\em right- extended composite pattern} is a
  rooted, planar, two-component graph $\Gamma_1\cup \Gamma_2$, with a
  distinguished root on $\Gamma_1$; represented in the pictures by a
  black dot.
\end{defn}

For example, the picture on the middle left of Figure~\ref{fig:LLmEx}
represents a term of type $[LL-]$ representing the composite
$\mu_4(U_2,U_1,R_2,\mu_4(U_3^2,L_2,U_1,R_2))$.

\begin{lemma}
  \label{lem:CancelL2w0}
  With weight $0$, terms of type $(L2)$ cancel against terms of type
  $(2L-)\cup [L**]\setminus([LL-]\cup[LR+])$.  Similarly, terms of
  type $(R2)$ cancel against terms of type
  $(2R+)\cup[R**]\setminus([RL-]\cup[RR+])$.
\end{lemma}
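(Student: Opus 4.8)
The plan is to adapt the ``pulling-out / pushing-in'' involution of Lemma~\ref{lem:CancelC2w0} to the extended setting. First I would use the left-right mirror symmetry of the disk $\CDisk$: reflecting $\CDisk$ interchanges left- and right-extended operations and reverses the cyclic order on $\partial\CDisk$, carrying $(2L-)$ to $(2R+)$ and the excluded pair $[LL-]$, $[LR+]$ to $[RR+]$, $[RL-]$. So it suffices to prove the $(L2)$ statement, and the $(R2)$ statement follows by applying this symmetry.

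Next I would represent a term of type $(L2)$ as a left-extended tiling tree $\Gamma$---carrying its chain of $2$-valent extending vertices along the root edge---together with a path $P$ from $\partial\CDisk$ to an internal edge $e$, recording the pair of sectors multiplied by the $\mu_2$. Since the weight is $0$, $\Gamma$ is a tree, so pulling $e$ out to the boundary along $P$ splits $\Gamma$ into exactly two components $\Gamma_1\cup\Gamma_2$, with $\Gamma_1$ the component containing the root. As the extending vertices lie on the root edge, they travel with $\Gamma_1$, so the result is a left-extended composite pattern, i.e. a term of type $[L**]$. Just as in the centered case, the vertex of $\Gamma_1$ created by pulling out is never a root, so the arc $S$ never joins the two roots; hence this part of the image omits exactly $[LL-]$ and $[LR+]$.

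The genuinely new phenomenon, absent in Lemma~\ref{lem:CancelC2w0}, is a degenerate family produced by the extending tail: when the path $P$ separates the extending chain from the body of the tree---so that the word stripped off by the $\mu_2$ is precisely the extending word read along the root edge---pulling out detaches only extending structure rather than a genuine second tiling component. I would reinterpret such a term as $\mu_2$ of that extending word against the remaining left-extended tree, i.e. a term of type $(2L-)$, after checking that the induced partition of the input sequence and the output algebra element agree on the two sides. The inverse map pushes $S$ back into the interior in the generic case and reabsorbs the external factor into the extending tail in the degenerate case; it is defined on all of $(2L-)\cup[L**]$ except on $[LL-]$ and $[LR+]$, where pushing in would illegally identify the two roots. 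This gives the desired fixed-point-free involution, and since paired terms share the same input sequence and output, they cancel in characteristic $2$.

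The hard part will be the bookkeeping of the degenerate $(2L-)$ case: one must pin down exactly when $P$ abuts the extending chain, verify that pulling out and tail-reabsorption are genuinely mutually inverse (and not merely inverse up to the ambiguity in factoring the first, extended input), and confirm that no $(L2)$ term is inadvertently sent into the reserved types $[LL-]$ or $[LR+]$. As in Lemma~\ref{lem:CancelC2w0}, the weight-zero (tree) hypothesis is what keeps the two-component splitting clean; the one essentially new ingredient is isolating the tail's participation and matching it with the external $\mu_2$-factor of a $(2L-)$ term.
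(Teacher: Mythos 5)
Your proposal follows the paper's proof of this lemma: the paper likewise applies the pulling-out/pushing-in involution of Lemma~\ref{lem:CancelC2w0} to the left-extended graph, obtaining terms of type $[L**]\setminus([LL-]\cup[LR+])$ generically (the arc $S$ never joining the two roots), and identifies the exceptional case --- arising only for $(L2+)$, when the $\mu_2$-splitting point interacts with the extending tail rather than a tree-internal edge --- with a cancelling term of type $(2L-)$, with $(R2)$ handled symmetrically. One small correction to your bookkeeping: the degenerate case occurs whenever the factor stripped off by the $\mu_2$ is an initial segment of the extending word, not only when it equals the whole extending word; this is consistent with your own closing remark that the real task is to pin down exactly when $P$ abuts the extending chain.
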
  

\begin{proof}
  Consider a term of type $(L2)$.  Let $\Gamma$ be tree at the core of
  the $L$ operation. The proof of Lemma~\ref{lem:CancelC2w0} gives a
  cancelling term of type $[L**]\setminus[LL-]\cup[LR+]$, except in
  the case where the term is of type $(L2+)$ and the leftmost input
  $a_1'$ into $\Gamma$ has $|a|<|a_1'|$, where here $|a|$ is the output
  algebra element of the bottom node. In that case, the
  cancelling term is of type $(2L-)$.

  As an example of this latter type of cancellation, the term of type
  $(L2)$ representing the non-trivial term in
  $\mu_4(\mu_2(U_1,U_1),R_2,U_3,L_2)$ (which outputs $U_1$) cancels
  against the term of type $(2L+)$:
  $\mu_2(U_1,\mu_4(U_1,R_2,U_3,L_2))$.
  
  The stated cancellation of terms of type $(R2)$ follow analogously.
\end{proof}  

Lemma~\ref{lem:CLpCLmw0} has the following analogue:

\begin{lemma}
  \label{lem:RLmLRpw0}
  With weight $0$, Terms of type $[RL-]$ cancel against terms of type $[LR+]$.
\end{lemma}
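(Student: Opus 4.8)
The plan is to argue exactly as in Lemma~\ref{lem:CLpCLmw0}, transported to the setting where both nodes are extended. First I would represent each term of type $[RL-]$ and each term of type $[LR+]$ by a two-component planar graph $\Gamma_1\cup\Gamma_2$ in which the root of $\Gamma_1$ and the root of $\Gamma_2$ are adjacent along $\partial\CDisk$. This adjacency is exactly what the subscripts $-$ and $+$ record: for $[RL-]$ the top component $\Gamma_2$ is channelled into the \emph{rightmost} input of $\Gamma_1$, and for $[LR+]$ into the \emph{leftmost} input, so in either case the connecting point sits next to the root of $\Gamma_1$ and the two roots become neighbours on the boundary. (This is the analogue, with both nodes extended, of the observation in the proof of Lemma~\ref{lem:CancelC2w0} that the connecting arc joins the two roots precisely for the extreme feeding positions.) The involution is then the interchange of the two roots: keep the embedded graph, its orientations, its edge-labels, and the adjacency of the two roots fixed, and only swap which root is declared ``first'' (that of $\Gamma_1$) and which ``second'' (that of $\Gamma_2$), just as in the proof of Lemma~\ref{lem:CLpCLmw0}.

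Next I would check that this interchange sends type $[RL-]$ to type $[LR+]$ and back. The left-versus-right extension type of a component is a local planar feature read off at that component's own root edge, independent of the bottom/top labelling; hence exchanging bottom and top turns the ordered pair $(\text{bottom},\text{top})=(R,L)$ into $(L,R)$, i.e.\ an $[RL]$ diagram into an $[LR]$ diagram. For the feeding slot, the same local configuration that makes $\Gamma_2$ channel into the rightmost input of the old bottom makes the old bottom channel into the leftmost input of the new bottom, because with the roots adjacent the roles of ``last input before the root'' and ``first input after the root'' are exchanged when the two roots trade places; thus $-$ becomes $+$. The operation visibly squares to the identity, and since no diagram can be simultaneously of type $[RL-]$ and of type $[LR+]$, it is fixed-point free and pairs the two families.

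Finally I would verify that inputs and outputs are preserved, which is the one new point beyond Lemma~\ref{lem:CLpCLmw0}, where the output was a bare idempotent. The input sequence $(a_1,\dots,a_n)$ is read from $\partial\CDisk$ by the recipe of Definition~\ref{def:AlgebraSequence}, using the arc between the two adjacent roots as the extra divider as in Definition~\ref{def:CompositePattern}; this arc and all boundary sectors are untouched by the interchange, so the input sequence is unchanged. The subtlety is the output, which now lies in $\Clg[t]\setminus\IdempRing[t]$ and so carries a genuine path. The key observation I would use is that the output of the composite is the algebra element occupying the boundary sectors \emph{between} the two adjacent roots: in the $[RL-]$ description these sectors are the right-extension of the old bottom, while in the $[LR+]$ description the very same sectors are the left-extension of the new bottom, and since for a path $a\colon[i]\to[j]$ one has $\Idemp{[j]}\cdot a=a=a\cdot\Idemp{[i]}$, the two descriptions record the identical element $a$; the exponent $t^{d}$ is determined by the unchanged graph, so the outputs agree as well. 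I expect this last bookkeeping -- confirming that the surviving boundary element is literally the same path for a diagram and its partner, even though the overall extension flips from right to left -- to be the main obstacle, and I would settle it by tracing the two sectors flanking the connecting arc, exactly as in the labelling constraint of Definition~\ref{def:CompositePattern}.
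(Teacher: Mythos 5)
Your proposal is correct and follows exactly the paper's argument: the paper's proof of Lemma~\ref{lem:RLmLRpw0} is precisely ``switch the order of the two roots, as in the proof of Lemma~\ref{lem:CLpCLmw0}.'' Your additional bookkeeping (checking that the extension types and the $\pm$ labels swap as claimed, and that the boundary sectors between the two adjacent roots record the same output element under either reading) is a correct elaboration of details the paper leaves implicit.
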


\begin{proof}
  This again switches the order of the two roots, as in the proof of
  Lemma~\ref{lem:CLpCLmw0}.
  An example is the relation
  \[ \mu_4(\mu_4(U_1,R_2,U_3,U_2^2),U_3,L_2,U_1)=
  \mu_4(U_1,R_2,U_3,\mu_4(U_2^2,U_3,L_2,U_1)).\]
\end{proof}

In the following lemma, we consider $\Ainfty$ relation with input
sequence $a_1,\dots,a_n$ with $a_i\cdot a_{i+1}=0$ for all
$i=1,\dots,n-1$ (and weight $0$). Note that terms of types $[LL-]$ and
$[RR+]$ automatically satisfy this condition.  Terms of type $(2C-)$,
$(2C+)$, $(2L+)$, and $(2R-)$ do not. Although we give a fairly coarse
cancellation statement (6 types of terms cancel), the proof actually
gives a more specific cancellation of pairs.

\begin{lemma}
  \label{lem:LLmRRpw0}
  Let $a$ be the output element to an $\Ainfty$ relation 
  with input sequence $a_1,\dots,a_n$ with $a_i\cdot a_{i+1}=0$ for
  all $i=1,\dots,n-1$ and weight $0$. For terms of weight $0$, we have cancellations of terms of the following types:
  \[ [LL-], [RR+], (2C-), (2C+), (2L+), (2R-).\]
\end{lemma}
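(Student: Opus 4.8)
The plan is to exhibit a fixed-point-free involution on the set of surviving terms of the six listed types, matching terms that record the same input sequence and the same output; since we work in characteristic $2$, such a pairing proves the claimed cancellation. As in the proofs of Lemmas~\ref{lem:CLpCLmw0} and~\ref{lem:RLmLRpw0}, I would realize the involution geometrically: each term is encoded by a single decorated planar graph (a composite pattern in the sense of Definition~\ref{def:CompositePattern}, extended as in Definition~\ref{def:ExtendedGraph}), and the involution re-parses that graph, either by relocating the separating arc $S$ or by reassigning which extending vertex plays the role of the outer $\mu_2$. Concretely, I expect the pairing to split into three families: $[LL-]\leftrightarrow(2C+)$, its mirror $[RR+]\leftrightarrow(2C-)$, and $(2L+)\leftrightarrow(2R-)$.

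For the first family I would start from a term of type $[LL-]$: a two-component graph $\Gamma_1\cup\Gamma_2$, both left-extended, with the root of $\Gamma_2$ feeding the rightmost input of $\Gamma_1$. Because $\Gamma_2$ is left-extended and occupies the last slot of $\Gamma_1$, its final input is the last element $a_n$ of the whole sequence. The move detaches $a_n$, records it as an outer right multiplication, and merges the rest of $\Gamma_2$ with $\Gamma_1$ into a single centered graph; the result is read off as the term $\mu_2(\mu^0_{n-1}(a_1,\dots,a_{n-1}),a_n)$ of type $(2C+)$, exactly as in the middle row of Figure~\ref{fig:LLmEx}. The inverse move pushes the $\mu_2$-factor $a_n$ back in, splitting the centered graph along $S$ into the two left-extended components. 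Applying the left--right reflection that reverses the input order and interchanges $L\leftrightarrow R$ and $+\leftrightarrow-$ yields the mirror pairing $[RR+]\leftrightarrow(2C-)$, so it suffices to treat one side.

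For the remaining pair, a term of type $(2L+)$ is a single tiling graph carrying a left-extending vertex (from $\mu^L_n$) together with one right-extending vertex (the $\mu_2$-factor). The same graph is read as a term of type $(2R-)$ by instead treating the extreme left vertex as the outer $\mu_2$ and bundling the remaining right-extending data into $\mu^R_n$; the two readings are genuinely distinct composite trees and define the involution. Here the hypothesis $a_i\cdot a_{i+1}=0$ is essential, and is exactly the point flagged before the statement: terms of type $[LL-]$ and $[RR+]$ force this condition automatically through Lemma~\ref{lem:NonMultiplyable}, whereas the $\mu_2$-containing terms do not, and imposing it rules out the spurious contributions in which the $\mu_2$ would multiply two adjacent original inputs, leaving precisely the terms that re-parse as above.

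The main obstacle will be checking that these re-parsing moves are well defined and genuinely involutive. In particular, I would need to verify that merging the two components of an $[LL-]$ pattern always produces a legal centered tiling graph in the sense of Definition~\ref{def:CenteredTilingGraph} (connected, with all cycles of length one) that is still compatibly decorated, and conversely that every $(2C+)$ term arises from a unique $[LL-]$ term. The delicate cases are the degenerate boundary configurations in which the detached extreme input is adjacent to the root, or in which pushing $S$ inward would create a cycle of length greater than one; these are exactly the configurations excluded once $a_i\cdot a_{i+1}=0$ is in force, and confirming that the excluded set matches on both sides of each pairing is what makes the involution fixed-point-free.
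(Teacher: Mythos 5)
Your overall strategy---a fixed-point-free involution realized by re-parsing a single planar picture, pushing the separating arc in, pulling edges out, or moving the root---is the right one and is what the paper does. But your proposed decomposition into three fixed families $[LL-]\leftrightarrow(2C+)$, $[RR+]\leftrightarrow(2C-)$, $(2L+)\leftrightarrow(2R-)$ cannot work, because each of the six types rigidly constrains the shadow grading of the output $a$ in a different way. A $(2C+)$ term is $\mu_2(\mu_{n-1}(a_1,\dots,a_{n-1}),a_n)$ with the inner node centered, so its output is an idempotent times a power of $t$ multiplied by $a_n$; hence every $(2C+)$ term has $|a|=|a_n|$. By contrast an $[LL-]$ term only forces $|a|<|a_1|$ and places no constraint on $|a|$ versus $|a_n|$. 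So for an input sequence whose $[LL-]$ contribution has $|a|<|a_n|$ there is simply no $(2C+)$ term with the same inputs and output to pair it with, and your map ``detach $a_n$ and merge'' is not defined there. The paper's own Figure illustrating this lemma exhibits exactly such a case: $\mu_4(U_2,U_1,R_2,\mu_6(U_3^2,L_2,U_1^2,R_2,U_3,U_2))$ is of type $[LL-]$ with $|a|=\tfrac12<1=|a_n|$, and it cancels against the $[RR+]$ term $\mu_4(\mu_6(U_2,U_1,R_2,U_3^2,L_2,U_1^2),R_2,U_3,U_2)$, not against any $(2C+)$ term. The same objection applies to $(2L+)\leftrightarrow(2R-)$ (which would force $|a|>|a_1|$ and $|a|>|a_n|$ simultaneously) and to $[RR+]\leftrightarrow(2C-)$.

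What the paper actually does is a nine-case analysis governed by the comparisons of $|a|$ with $|a_1|$ and with $|a_n|$: the column $|a|<|a_1|$, $|a|=|a_1|$, $|a|>|a_1|$ determines whether the ``left-side'' participant is $[LL-]$, $(2C-)$, or $(2R-)$, and the row $|a|<|a_n|$, $|a|=|a_n|$, $|a|>|a_n|$ determines whether its partner is $[RR+]$, $(2C+)$, or $(2L+)$; each term pairs with exactly one partner, but which one depends on the input data. The geometric move is as you describe only in part: starting from $[LL-]$ one pushes in the edge between the two roots and then either pulls out \emph{another} edge (landing in $[RR+]$) or moves the root to the next available leaf (landing in $(2C+)$ or $(2L+)$). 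Your closing paragraph correctly flags that well-definedness of the merge/split moves needs checking, but the decisive issue is the one above: the pairing itself must be made length-dependent before any such verification can begin.
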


\begin{proof}
  There are nine cases of cancellation, according to the relative
  lengths of $a_1$, $a_n$, and
  $a$. The cancellations are spelled out in the following table
  (exactly as in~\cite[Table~2]{TorusAlg}):

  \begin{tabular}{r||c|c|c}
      & $|a|<|a_1|$ & $|a|=|a_1|$ & $|a|>|a_1|$ \\ 
  \hline 
  \hline
  $|a|<|a_n|$ & $[LL-]$
  $\Leftrightarrow$ $[RR+]$
            & $(2C-)$ $\Leftrightarrow$ $[RR+]$
              & $(2R-)$ $\Leftrightarrow$ $[RR+]$ 
\\
\hline 
  $|a|=|a_n|$ & $[LL-]$ 
$\Leftrightarrow$ $(2C+)$
            & $(2C-)$ $\Leftrightarrow$ $(2C+)$
              & $(2R-)$ $\Leftrightarrow$ $(2C+)$\\
\hline
  $|a|>|a_n|$ & $[LL-]$ 
  $\Leftrightarrow$ $(2L+)$
            & $(2C-)$ $\Leftrightarrow$ $(2L+)$
& $(2R-)$ $\Leftrightarrow$ $(2L+)$
\end{tabular}

Given a term of type $[LL-]$ (in the case where $|a|<|a_1|$), the
cancelling term is found by pushing in the edge between the roots of
$\Gamma_1$ and $\Gamma_2$, and then either pulling out another edge
between them (when $|a|<|a_n|$); or moving the root to the next available
leaf; see Figure~\ref{fig:LLm}. 
See also Figure~\ref{fig:LLmEx} for examples in the three subcases, illustrating
the cancellations:
\begin{align*}
 \mu_4(U_2,U_1,R_2,\mu_6(U_3^2,L_2,U_1^2,R_2,U_3,U_2))
&= \mu_4(\mu_6(U_2,U_1,R_2,U_3^2,L_2,U_1^2),R_2,U_3,U_2)) \\
 \mu_4(U_2,U_1,R_2,\mu_4(U_3^2,L_2,U_1,R_2))
&= \mu_2(\mu_6(U_2,U_1,R_2,U_3^2,L_2,U_1),R_2)) \\
 \mu_4(L_2 U_2,U_1,R_2,\mu_4(U_3^2,L_2,U_1,R_2))
&= \mu_2(\mu_6(L_2 U_2,U_1,R_2,U_3^2,L_2,U_1),R_2)) \\
\end{align*}

\begin{figure}[ht]
\input{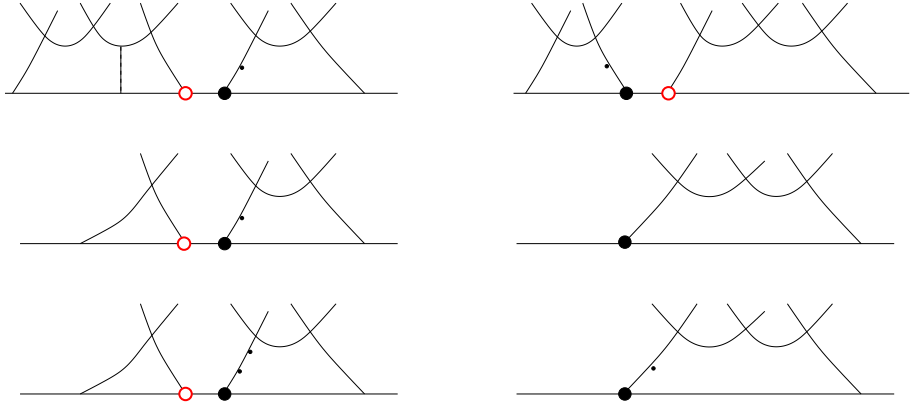}
\caption{\label{fig:LLm} {\bf{Cancelling terms of type $[LL-]$.}}}
\end{figure}

\begin{figure}[ht]
\input{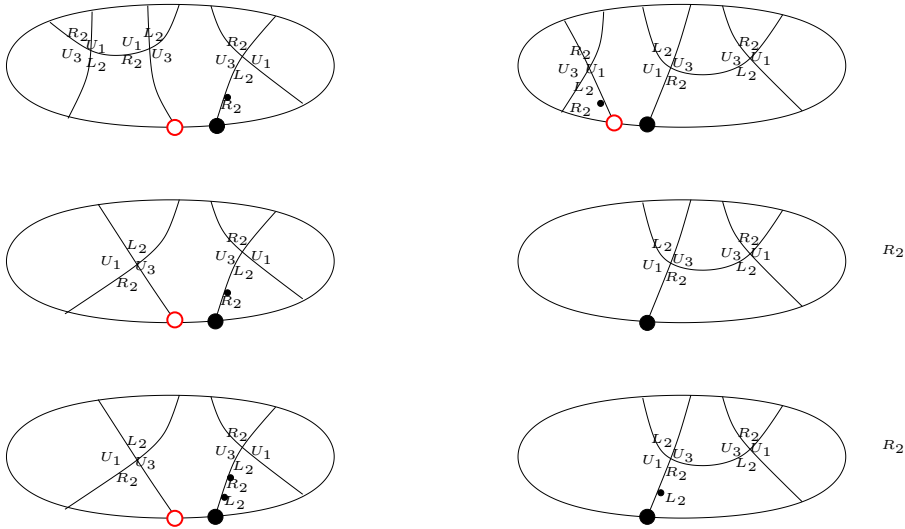}
\caption{\label{fig:LLmEx} {\bf{Examples of cancellation of terms of type $[LL-]$.}} On the top row, the cancelling term is of type $[RR+]$; in the middle it 
is of type $(2C+)$, and on the bottom, it is of type $(2L+)$}
\end{figure}

Symmetrically, given a term of type $[RR+]$ the cancelling term is found by 
pushing in the distinguished edge and then pulling out another edge; or moving the root. 

In the four remaining cases (i.e. where $|a|\geq \max(|a_1|,|a_n)$),
the cancellation is obtained by moving the root of the operation. 
See Figure~\ref{fig:MoveRoot} (for an example where $|a|=|a_1|=|a_n|$). 
That figure in turn illustrates the relation
\begin{align*}
\mu_2(U_1,(&\mu_{10}(R_2, R_3, U_4, U_3, U_4, L_3, L_2, U_1, U_2, U_1)) \\
&=\mu_2(\mu_{10}(U_1,R_2, R_3, U_4, U_3, U_4, L_3, L_2, U_1, U_2), U_1)) 
\end{align*}

\begin{figure}[ht]
\input{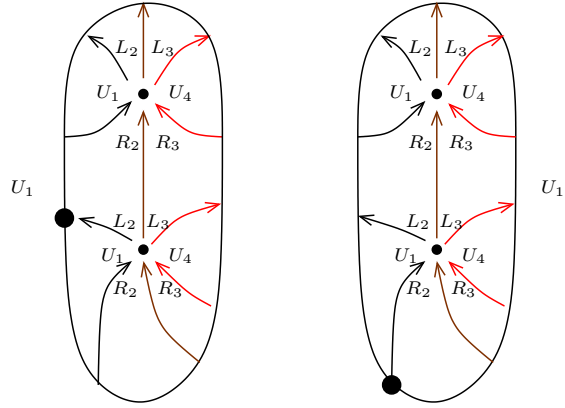}
\caption{\label{fig:MoveRoot} {\bf{$(2C+)$ and $(2C-)$ cancel.}}}
\end{figure}

\end{proof}

\begin{proof}[of Theorem~\ref{thm:unWeightedAinftyAlgebra}]
  By construction an operation $\mu_n(a_1,\dots,a_n)$ vanishes if 
  $n>2$ and $|a_i|=0$ for any $i$. Unitality follows readily.

  We turn next to the $\Ainfty$ relation.

  Each $\Ainfty$ relation corresponds to a sequence $(a_1,\dots,a_n)$
  of incoming algebra elements.

  The complexity of the $\Ainfty$ relation can be
  measured by the number $n$ of inputs, and then the number $\ell$ of
  consecutive pairs of inputs with non-trivial $\mu_2$. For example,
  there is an $\Ainfty$ relation with three inputs
  $(R_2,L_2,R_2)$. For this relation,  $n=3$ and $\ell=2$.
  
  Clearly, when $n=3$ the $\Ainfty$ relation holds because $\Clg(m,1)$
  is an associative algebra.

  Consider $n>3$. The case where some $a_i$ is an idempotent is special
  since our actions are strictly unital. The only non-trivial case is when
  either $a_1=1$ or $a_n=1$. When $a_1=1$, the two (cancelling) non-trivial terms are
  \[ \mu_2(1,\mu_{n-1}(a_2,\dots,a_n))=\mu_{n-1}(\mu_2(1,a_2),a_3,\dots,a_n).\]
  The case where $a_n=1$ works similarly.

  It now suffices to verify the $A_\infty$ relations with $n>3$, 
  where the sequence $a_1,\dots,a_n$ is of pure elements in $\Clg_+$.

  Suppose that $n> 3$ and $\ell>1$. Then by
  Lemma~\ref{lem:NonMultiplyable}, there are no non-trivial terms in
  the $\Ainfty$ relation.

  Consider next the case where $n>3$ and $\ell=1$.  When the output is
  a power of $t$, Lemma~\ref{lem:CancelC2w0} gives the cancellations
  between terms of type $[C*]\setminus [CL+]\cup[CR-]$ and those of
  type $(C2)$. The remaining cancellations of terms of type $[CL+]$
  against terms of type $[CR-]$ are provided by
  Lemma~\ref{lem:CLpCLmw0}. 

  When the output is not just a power of $t$, Lemma~\ref{lem:CancelL2w0}
  gives the cancellations of terms of type $(L2)$ and $(R2)$ against
  terms of type $[L**]\cup[R**]\setminus ([LL-]\cup[LR+]\cup[RR+]\cup[RL-])$.
  Observe that the terms of excluded type all have $\ell=0$ in the input sequence. Thus, we have completed the proof when $\ell=1$.

  When $n>3$ and $\ell=0$,
  Lemmas~\ref{lem:CLpCLmw0},~\ref{lem:RLmLRpw0},
  and~\ref{lem:LLmRRpw0} give the desired cancellations.
\end{proof}

\subsection{Verifying the $A_\infty$ relation in the weighted case}

We modify the above discussion to take into account possible weights,
starting again from the case where the output of the operation lies in
$\IdempRing[t]$. In the unweighted case, we listed the three
``centered composite types'' in
Definition~\ref{def:CenteredCompositeTypes}.  In the weighted case,
these are to be modified with the understanding that instead of trees,
we are using the more general centered tiling graphs of
Definition~\ref{def:CenteredTilingGraph}. We also add one more type of composite tree:
\begin{itemize}
  \item  A node labeled $\mu^w_0$ on top 
    (which can be any of the operations from Equation~\eqref{eq:AddedMu0s})
    and a tree labeled $\mu_n^{w'}$ on the bottom. Such terms are said to be of type $[C0]$.
    Again, these are subdivided in to types $[C0-]$, $[C0+]$, and $[C0g]$, according to whether
    the $\mu^{w}_0$ term is channeled into the rightmost, 
    leftmost, or other input.
\end{itemize}

Graphically, these terms are represented by a centered tiling graph,
together with a distinguished boundary arc whose associated algebra
element is one of the $U_i$. We find it convenient to label this distinguished
boundary arc by $S$. See for example Figure~\ref{fig:Mu0}; the
term on the left, of type $[C0g]$, represents
\[ \mu_{10}(U_1,R_2,R_3, U_4, \mu_0^{\eVec{3}}, U_4, L_3,L_2, U_1,U_2)=t^2\Idemp{[2]} \]
while the one on the right
\[ \mu_{10}(\mu_0^{\eVec{1}},R_2,R_3, U_4, U_3,U_4,L_3,L_2, U_1,U_2)=t^2\Idemp{[2]} \]
(Throughout this section we suppress weights when they vanish; so, the
$\mu_{10}$ operations appearing above are, in fact,
$\mu_{10}^{0}$.)

\begin{figure}[ht]
\input{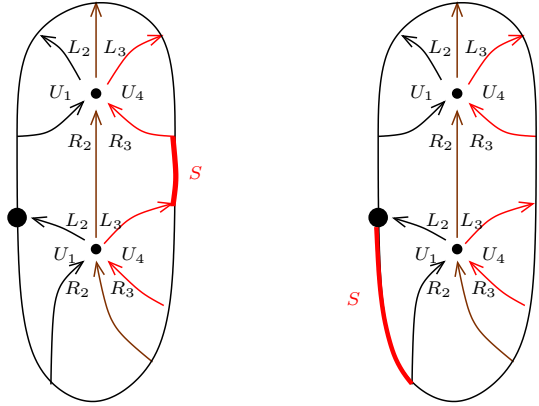}
\caption{\label{fig:Mu0} {\bf{Terms of type $[C0g]$ and $[C*]$}}}
\end{figure}

When we write terms of type $[C**]$, they include all of $[C0*]$, $[CL*]$, and $[CR*]$. 

\begin{lemma}
  \label{lem:CancelC2}
  With weight $0$,
  terms of type $(C2)$ cancel against terms of type 
  $[C**]\setminus ([CL+]\cup[CR-]\cup [C0+]\cup[C0-])$.
\end{lemma}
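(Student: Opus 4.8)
The plan is to extend the \emph{pulling-out} involution of Lemma~\ref{lem:CancelC2w0} to the weighted setting, the only new feature being the appearance of terms of type $[C0*]$ produced by the operations $\mu^{\eVec{i}}_0=U_i$ of Equation~\eqref{eq:AddedMu0s}. As before, I would represent a term of type $(C2)$ by a centered tiling graph $\Gamma$ --- now allowed to carry internal monogon and bigon faces, so $\Gamma$ need no longer be a tree --- together with a dividing path $P$ running from $\partial D$, between the two consecutive boundary sectors multiplied by the top $\mu_2$, to an internal edge $e$ of $\Gamma$. The map is then realized by pulling $e$ out to the boundary along $P$, opening $P$ into a boundary arc $S$; this preserves the total weight $\vec{w}$, the output idempotent $\iota(\Gamma)$, and the exponent $t^{d(\Gamma)}$, redistributing only the weight between the two nodes.

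First I would split into two cases according to the terminal edge $e$. If $e$ is a bridge of $\Gamma$, pulling it out disconnects $\Gamma$ into two components $\Gamma_1\cup\Gamma_2$ and yields a composite pattern, i.e.\ a term of type $[CL*]$ or $[CR*]$; this is exactly the situation of Lemma~\ref{lem:CancelC2w0}, whose argument I would quote, including the identification of the obstructed families $[CL+]$ and $[CR-]$ as precisely those in which $S$ joins the two root vertices. If instead $e$ bounds an internal face --- a monogon containing $U_1$ or $U_m$, or a bigon of type $i$ containing $R_i$ and $L_i$ --- then pulling $e$ out does not disconnect $\Gamma$ but opens the face onto $\partial D$, converting it into a distinguished boundary arc $S$ whose associated algebra element is $U_i$. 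This is precisely the graphical description of a term of type $[C0*]$, with top operation $\mu^{\eVec{i}}_0=U_i$ and with the weight $\eVec{i}$ transferred from the bottom node to the top node.

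Next I would verify that, after deleting the four families $[CL+]$, $[CR-]$, $[C0+]$, $[C0-]$, the resulting map is a fixed-point-free involution, with inverse given by \emph{pushing $S$ back in}: for a composite pattern this recreates the bridge $e$, and for a distinguished $U_i$ arc it folds that arc into the interior to recreate the internal face. The crucial point is that the fold succeeds exactly when the $U_i$ arc lies strictly between two genuine inputs from $\Clg_+$, i.e.\ for type $[C0g]$, and fails precisely when that arc is the leftmost or rightmost input, i.e.\ for $[C0+]$ and $[C0-]$, since there the fold would be pinned against the root vertex, which must remain a boundary leaf. Conversely, because the top $\mu_2$ of a $(C2)$ term multiplies two non-idempotent inputs, the unfolded arc always lands in general position, so $(C2)$ maps only into $[C0g]$; this matches the stated exclusion and shows the two constructions are mutually inverse.

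The step I expect to be the main obstacle is the face case. One must first check that every internal edge of a centered tiling graph is either a bridge or bounds a monogon or bigon face, so that the dichotomy above is exhaustive; and then that \emph{opening a face} and \emph{folding in a $U_i$ arc} are honest mutual inverses respecting all labeling conventions. The delicate bookkeeping is for bigons: a bigon of type $i$ opens to one of the pure summands $R_iL_i$ or $L_iR_i$ of $U_i=R_iL_i+L_iR_i$, in a definite idempotent, so the cancellation with $(C2)$ must be checked summand-by-summand, the two summands of $U_i=\mu^{\eVec{i}}_0$ matching the two possible idempotent configurations of the reopened bigon. The monogon cases $i=1,m$ are cleaner, since $U_1$ and $U_m$ are already pure, and the bridge case reduces verbatim to Lemma~\ref{lem:CancelC2w0}.
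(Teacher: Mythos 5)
Your proposal is correct and takes essentially the same route as the paper: the pulling-out involution of Lemma~\ref{lem:CancelC2w0}, split according to whether the pulled-out edge disconnects the graph (giving $[CL*]$/$[CR*]$ terms) or instead abuts a closed monogon or bigon face that opens into a $\mu_0^{\eVec{i}}$ node of type $[C0g]$, inverted by pushing $S$ back into the interior, with $[C0+]$ and $[C0-]$ excluded precisely because the newly created arc $S$ never meets the root. The extra bookkeeping you flag for bigons and the pure summands of $U_i$ is detail the paper compresses into the remark that ``the newly exposed sector is labeled by a single $U_i$,'' not a different argument.
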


\begin{proof}
  As in the proof of Lemma~\ref{lem:CancelC2w0}, the term that cancels
  a given term of type $(C2)$ is obtained by pushing out an edge. Only
  now, since the original may have loops, the newly-created graph can
  have one or two components. When it has one component, the edge must
  meet a cycle which is in fact a length one cycle by the hypotheses
  of centered tiling graphs
  (Definition~\ref{def:CenteredTilingGraph}). It is easy to see that
  in this case, the newly exposed sector is labeled by a single
  $U_i$.

  The newly exposed boundary arc can now be denoted by $S$. Evidently, $S$ is disjoint from the root
  vertex (i.e. we are hitting terms that are not of type $[C0+]$ or $[C0-]$).

  Conversely, the inverse is once again constructed by pushing $S$ into the interior.
  This creates a centered tiling pattern, provided that the root does not meet $S$. 
  See Figure~\ref{fig:Mu0cancel}.

  \begin{figure}[ht]
    \input{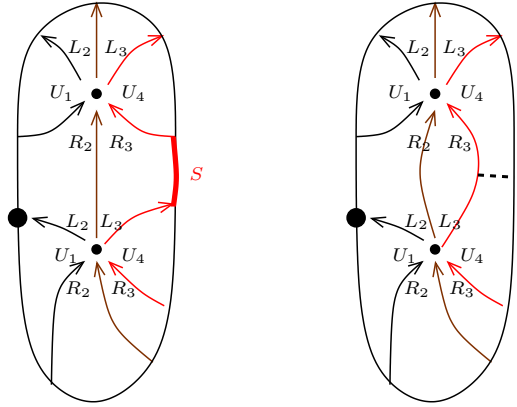}
    \caption{\label{fig:Mu0cancel} {\bf{Pulling out the dashed edge on the right yields the term of type $[C0g]$ on the left.}}}
  \end{figure}
\end{proof}

The poof of Lemma~\ref{lem:CLpCLmw0} adapts readily to the weighted case to give:

\begin{lemma}
  \label{lem:CLpCLm}
  Terms of type $[CL+]$ cancel against terms of type $[CR-]$.
  Similarly, terms of Type $[C0+]$ cancel against those of type $[C0-]$.
\end{lemma}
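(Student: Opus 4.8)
The plan is to transplant the root-swapping involution from the proof of Lemma~\ref{lem:CLpCLmw0} essentially unchanged, and to check only that it survives the two features that are new in the weighted setting: the constituent graphs may now contain length-one cycles, and one of the two composed operations may be a nullary operation $\mu^{\vec w}_0=U_i$ from Equation~\eqref{eq:AddedMu0s}.

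For the pairing of $[CL+]$ with $[CR-]$, I would represent each such term by its centered composite pattern $\Gamma_1\cup\Gamma_2$ (Definition~\ref{def:CompositePattern}), with the roots of $\Gamma_1$ and $\Gamma_2$ adjacent along $\partial D$; the only change from the unweighted case is that $\Gamma_1$ and $\Gamma_2$ are now general centered tiling graphs, possibly carrying loops. The involution interchanges the two roots (swapping the solid and the hollow dot). By the criterion recorded just after Definition~\ref{def:CompositePattern}, this interchanges left- and right-extension and simultaneously flips the extremal decoration, carrying $[CL+]$ to $[CR-]$ and back. Since the involution never touches the interior of either component, it fixes every bounded region, hence preserves the weight vector $\vec w(\Gamma)$ and the exponent $d(\Gamma)$; it also preserves the input sequence and the output. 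Fixed-point freeness is immediate: a single pattern cannot be of both types, as the extension type is determined by the cyclic order of the two roots along $\partial D$.

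For the pairing of $[C0+]$ with $[C0-]$, the component $\Gamma_2$ degenerates to the nullary datum $\mu^{\vec w}_0=U_i$, which I would record (as in Figure~\ref{fig:Mu0}) by a single centered tiling graph $\Gamma$ together with a distinguished boundary arc $S$ whose algebra element is $U_i$. A term is of type $[C0+]$ exactly when the root lies immediately clockwise of $S$ (so $U_i$ is the first input) and of type $[C0-]$ when the root lies immediately counterclockwise of $S$ (so $U_i$ is the last input). The involution moves the root to the opposite side of $S$. Again $\Gamma$, and with it $\vec w(\Gamma)$, $d(\Gamma)$, and the genuine input sequence $(a_1,\dots,a_n)$, is left untouched, and the involution is visibly fixed-point free since the two sides of $S$ are distinct.

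The step I expect to need the most care is confirming that the two paired terms represent literally equal elements of $\IdempRing[t]$. The factor $t^{d(\Gamma)}$ is preserved because the underlying graph is unchanged, so the content of the check is that the output idempotent $\iota(\Gamma)=\Idemp{[k]}$ is unchanged when the root crosses $S$ (respectively, when the two roots are swapped). This is the one place where the local labeling conventions enter: the edges of $\Gamma$ flanking the distinguished arc $S$ (respectively, the two root edges on either side of the cut) are forced by those conventions to carry the same label $k$, so that $\iota(\Gamma)$ is preserved; once this is verified the cancellation in $\IdempRing[t]$ follows at once.
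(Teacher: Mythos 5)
Your proposal follows essentially the same route as the paper: the $[CL+]$/$[CR-]$ pairing is the root-swapping involution carried over verbatim from the unweighted Lemma~\ref{lem:CLpCLmw0}, and the $[C0+]$/$[C0-]$ pairing moves the root marker from one endpoint of the distinguished arc $S$ to the other. Your additional check that the two edges flanking $S$ carry the same label (so that $\iota(\Gamma)$, and hence the output in $\IdempRing[t]$, is preserved) is correct and is left implicit in the paper.
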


\begin{proof}
  The cancellation of $[CL+]$ and $[CR-]$ is as in the proof of
  Lemma~\ref{lem:CLpCLmw0}. The terms of type $[C0+]\cup [C0-]$ are
  those where the the distinguished boundary arc $S$ contains the
  root. These terms come in pairs, according to which boundary
  component of $S$ is marked by the root marker; see Figure~\ref{fig:C0pC0m}
  for a picture of the cancellation
  \begin{align*} 
    \mu_{10}(R_2,R_3,U_4,R_3,&U_4,L_3,L_2,U_1,U_2,\mu_0^{\eVec{1}})\\
    &=\mu_{10}(\mu_0^{\eVec{1}},R_2,R_3,U_4,R_3,U_4,L_3,L_2,U_1,U_2).
  \end{align*}
  \begin{figure}[ht]
    \input{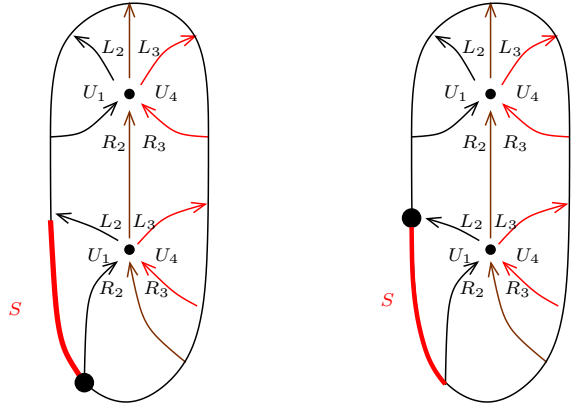}
    \caption{\label{fig:C0pC0m} {\bf{Cancelling $[C0+]$ and $[C0-]$.}}}
  \end{figure}
\end{proof}

Generalizing the case of ``extended composites'', we can adapt the
definition from Definition~\ref{def:ExtendedComposite}, adding the
following additional types of terms:
\begin{itemize}
\item Nodes of type $[L0*]$ and $[R0*]$. 
As the notation suggests, terms of type $[L0*]$ are
have some node with no inputs (i.e. a $\mu^w_0$ for some $w>0$) 
feeding into a left-extended bottom vertex. Again, $*$ can be $-$, $g$, or $+$,
as in the conventions of Definition~\ref{def:ExtendedComposite}.
\end{itemize}

\begin{lemma}
  \label{lem:CancelL2}
  Terms of type $(L2)$ cancel aaginst terms of type
  $(2L-)\cup [L0*]
  [L**]\setminus([L0+]\cup [LL-]\cup[LR+])$.  Similarly, terms of
  type $(R2)$ cancel against terms of type
  $(2R+)\cup[R**]\setminus([R0-]\cup [RL-]\cup[RR+])$.
\end{lemma}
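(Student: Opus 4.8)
The plan is to prove this weighted statement by overlaying two arguments already in hand: the left-extended pairing of Lemma~\ref{lem:CancelL2w0} and the loop-handling refinement of Lemma~\ref{lem:CancelC2}. Indeed, Lemma~\ref{lem:CancelL2} is to Lemma~\ref{lem:CancelL2w0} exactly as Lemma~\ref{lem:CancelC2} is to Lemma~\ref{lem:CancelC2w0}, so the argument should mirror those proofs, the single new ingredient being the appearance of $[L0*]$ terms.

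First I would fix a term of type $(L2)$ and let $\Gamma$ be the left-extended tiling graph at the core of its bottom operation. As in the proof of Lemma~\ref{lem:CancelC2}, the factoring of an input by the top $\mu_2$ is recorded by an arc $P$ joining the boundary of the disk to the interior of $\Gamma$, and the cancelling term is produced by pushing this arc out to the boundary; I would call the newly exposed boundary arc $S$. The feature absent from the unweighted Lemma~\ref{lem:CancelL2w0} is that $\Gamma$ may now contain cycles: when the pushed-out arc meets one, the centered-tiling hypothesis (Definition~\ref{def:CenteredTilingGraph}) forces it to be a length-one cycle, so the newly exposed sector carries a single $U_i$, and the resulting term is of type $[L0*]$ rather than $[L**]$.

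Next I would identify the excluded types as exactly the configurations for which the inverse operation --- pushing $S$ back into the interior --- fails to return a valid rooted graph. As in Lemmas~\ref{lem:CancelC2w0} and~\ref{lem:CLpCLm}, this happens precisely when $S$ is forced to meet the root vertex: in the two-component case these are the terms of type $[LL-]$ and $[LR+]$, and in the loop case this is the term of type $[L0+]$. Finally I would dispatch the one residual discrepancy inherited from Lemma~\ref{lem:CancelL2w0}: when the term is of type $(L2+)$ and the leftmost input $a_1'$ into $\Gamma$ satisfies $|a|<|a_1'|$ (with $|a|$ the output of the bottom node), pushing out does not yield an $[L**]$ or $[L0*]$ term, and the cancelling partner is instead of type $(2L-)$. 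The statement for $(R2)$ then follows by the mirror-symmetric argument.

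The main obstacle I anticipate is bookkeeping rather than conceptual: confirming that the prescribed map on terms really is a fixed-point-free involution once the weight (loop) cases and the left-extension boundary cases are simultaneously present. In particular I would take care that the loop case and the exceptional $(L2+)$/$(2L-)$ case cannot interfere near the extension vertices adjacent to the root, so that each excluded type is excluded for a single reason and every surviving term acquires a unique partner.
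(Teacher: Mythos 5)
Your proposal is correct and follows essentially the same route as the paper: the paper's proof simply runs the argument of Lemma~\ref{lem:CancelL2w0} again, noting that the pulling-out operation may now fail to disconnect the graph, in which case (by the length-one-cycle observation from Lemma~\ref{lem:CancelC2}) the resulting term is of type $[L0*]\setminus[L0+]$. Your treatment of the excluded types and of the exceptional $(L2+)$/$(2L-)$ case matches the paper's intent exactly.
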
  
\begin{proof}
  This proof is the same as Lemma~\ref{lem:CancelL2w0},
  except now the pulling out operation may not disconnect the graph.
  For example, starting from a term of type $(L2)$, when pulling out
  results in a connected graph, the corresponding term is of type
  $[L0*]\setminus[L0+]$.
\end{proof}

Lemma~\ref{lem:RLmLRpw0} has the following analogue:

\begin{lemma}
  \label{lem:RLmLRp}
  Terms of type $[RL-]$ cancel against terms of type $[LR+]$.
\end{lemma}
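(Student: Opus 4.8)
The plan is to run exactly the same fixed-point-free involution used in the weight-$0$ case (Lemma~\ref{lem:RLmLRpw0}), checking only that the enlargement from trees to centered tiling graphs with length-one cycles causes no new difficulty. Both $[RL-]$ and $[LR+]$ are represented by extended composite patterns, i.e.\ rooted planar two-component graphs $\Gamma_1\cup\Gamma_2$ in which the bottom component $\Gamma_1$ carries the distinguished (black) root and the top component $\Gamma_2$ carries its own root. The defining feature of these two types is that the top node is channelled into an \emph{extreme} input of the bottom node --- the rightmost for $[RL-]$, the leftmost for $[LR+]$ --- which, by the planar-geometry criterion recorded just before Lemma~\ref{lem:CancelC2w0}, is precisely the condition that the root of $\Gamma_2$ be adjacent along $\partial D$ to the root of $\Gamma_1$, separated only by the arc $S$ across which the two extremal factors multiply.

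First I would make the involution precise: given such a pattern with adjacent roots, interchange the roles of the two components, declaring the former $\Gamma_2$ to be the new bottom (black-rooted) component and the former $\Gamma_1$ to be the new top, exactly as in the proof of Lemma~\ref{lem:CLpCLmw0} (and its weighted upgrade Lemma~\ref{lem:CLpCLm}). Since the underlying embedded graph $\Gamma_1\cup\Gamma_2$ is left untouched, and since the two roots are adjacent with no boundary interval between them, reading $\partial D$ counterclockwise from either root yields the same sequence $(a_1,\dots,a_n)$; likewise the total weight $\vec w$, the exponent $d$, and the output element (the product of the extending labels times $t^{d}$) are unchanged. Hence the map preserves all the data of the $\Ainfty$ relation, is manifestly an involution, and is fixed-point-free because the two extension types genuinely differ. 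I would next verify the type exchange: travelling $\partial D$ from the new black root, the former right-extended bottom becomes a right-extended top, the former left-extended top becomes a left-extended bottom, and the channelling site migrates from rightmost to leftmost, so that $[RL-]$ is carried precisely to $[LR+]$ and back. I would close with a concrete example in the spirit of the one displayed in Lemma~\ref{lem:RLmLRpw0}.

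The step I expect to be the main obstacle is the bookkeeping in that last verification: one must confirm that under the swap both extension letters $L/R$ and the channelling index $+/-$ transform exactly as claimed, rather than producing a $[LL]$ or $[RR]$ term. The key point is that, because the two roots are adjacent, the factorization of the extreme input of the outer node and the output produced by the inner node are interchanged symmetrically, so the configuration is genuinely a single embedded graph read with the two opposite root-orderings. Once this is in hand, the weighted setting contributes nothing new, since the possible length-one cycles of the centered tiling graphs lie in the interiors of $\Gamma_1$ and $\Gamma_2$, away from the arc $S$ separating the two roots, and therefore play no role in the swap.
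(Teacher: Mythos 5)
Your proposal is correct and follows the paper's own argument: the paper proves this by observing that the root-switching involution from the weight-$0$ case (Lemma~\ref{lem:RLmLRpw0}, itself modeled on Lemma~\ref{lem:CLpCLmw0}) applies verbatim, which is exactly the involution you describe. Your additional bookkeeping on how the extension letters and channelling index transform, and the remark that length-one cycles stay interior to the components and so do not interfere, are correct elaborations of what the paper leaves implicit.
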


\begin{proof}
  The proof of Lemma~\ref{lem:RLmLRpw0} applies.
\end{proof}

\begin{lemma}
  \label{lem:CancelL0pR0m}
  Terms of type $[L0+]$ cancel against terms of type $[R0-]$.
\end{lemma}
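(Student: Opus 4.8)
The plan is to exhibit a fixed-point-free involution on the set of terms of type $[L0+]\cup[R0-]$ which preserves the input sequence, the weight vector, and the output, and which interchanges the two types; in characteristic two this pairs them off so that their contributions to the weighted $A_\infty$ relation cancel. This is the extended analogue of the second half of Lemma~\ref{lem:CLpCLm}, where terms of type $[C0+]$ were cancelled against terms of type $[C0-]$, and the proof should say no more than that this earlier argument "applies."

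First I would fix the graphical model, adapting the $[C0*]$ discussion preceding Lemma~\ref{lem:CancelC2} to the extended setting of Definition~\ref{def:ExtendedGraph} and Definition~\ref{def:ExtendedComposite}. A term of type $[L0+]$ or $[R0-]$ is recorded by a single (extended) centered tiling graph $\Gamma$ together with a distinguished boundary arc $S$ whose exposed sector carries the label $U_i$ coming from the source operation $\mu^{\eVec{i}}_0$ of Equation~\eqref{eq:AddedMu0s}. The defining feature of the types $[L0+]$ and $[R0-]$, as opposed to $[L0g]$, $[R0g]$, etc., is that $S$ is adjacent to the root: for $[L0+]$ the graph is left-extended and the $\mu_0$ feeds into the leftmost input, while for $[R0-]$ it is right-extended and the $\mu_0$ feeds into the rightmost input. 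Thus in both cases $S$ abuts the root vertex, lying on one of the two sides of it.

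The involution is then the one from Figure~\ref{fig:C0pC0m}: move the root marker to the vertex on the opposite side of $S$. I would check that this single move flips \emph{both} decorations at once. Re-reading which side of $S$ the root lies on reinterprets a left-extending sequence along the root edge as a right-extending one, and simultaneously carries the leftmost feed position to the rightmost feed position; hence $[L0+]$ is sent precisely to $[R0-]$ and conversely. Since the two configurations are genuinely distinct (the root cannot sit on both sides of $S$), the map has no fixed points, and because it exchanges the two disjoint families it is an honest pairing.

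The main obstacle — and the step most likely to hide an error — is verifying that the output is genuinely unchanged, since moving the root across $S$ changes the root edge and hence the idempotent $\iota(\Gamma)=\Idemp{[k]}$. A left-extended operation outputs $a\cdot\iota(\Gamma)\,t^{d(\Gamma)}$, while a right-extended one outputs $\iota(\Gamma')\,t^{d(\Gamma)}\cdot a$, where $a$ is the product of the extending algebra elements. I would observe that $a$ is a single basic algebra element of $\Clg$ whose source and target idempotents are determined, that $t$ is central, and that the two idempotents $\iota(\Gamma)$ and $\iota(\Gamma')$ produced by the root-move are exactly the target resp.\ source idempotents of $a$; therefore $a\cdot\iota(\Gamma)=\iota(\Gamma')\cdot a$, so both outputs agree. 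As the extending sequence, the weight $\vec{w}(\Gamma)$, and the exponent $d(\Gamma)$ are visibly preserved by the move, this completes the cancellation.
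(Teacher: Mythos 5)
Your proposal is correct and follows essentially the same route as the paper's own (much terser) proof: the paper also cancels $[L0+]$ against $[R0-]$ by moving the root marker across the distinguished arc $S$, exactly as in the $[C0+]/[C0-]$ case of Lemma~\ref{lem:CLpCLm}. The one point the paper makes explicit that you phrase only as a ``reinterpretation'' is that the extending $2$-valent vertices are physically transferred to the new root edge; your verification that the extending sequence, the weight, the exponent $d(\Gamma)$, and the output (via $a\cdot\iota(\Gamma)=\iota(\Gamma')\cdot a$) are all preserved covers this adequately.
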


\begin{proof}
  Like in the proof of Lemma~\ref{lem:CLpCLm}, the cancellation is 
  obtained by moving the root vertex. In this case, though, there are additional
  extending vertices that are moved, as well. See Figure~\ref{fig:L0pR0m}
  for an illustration of the relation
  \begin{align*}
    \mu_{10}(&R_3,U_4,U_3,U_4,L_3,L_2,U_1,U_2,U_1,\mu_0^{\eVec{2}}) \\
    &= L_2 t^2 \\
    &=\mu_{10}(\mu_0^{\eVec{2}},R_3,U_4,U_3,U_4,L_3,L_2,U_1,U_2,U_1).
  \end{align*}
  \begin{figure}[ht]
    \input{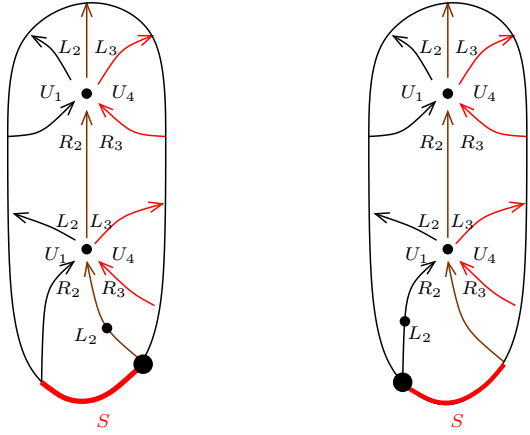}
    \caption{\label{fig:L0pR0m} {\bf{Cancelling $[L0+]$ and $[R0-]$.}}}
  \end{figure}
\end{proof}

Lemma~\ref{lem:LLmRRpw0} has the following analogue. As in
Lemma~\ref{lem:LLmRRpw0}, a more precise cancellation scheme than the
one stated in the lemma can be found in the proof.

\begin{lemma}
  \label{lem:LLmRRp}
  Let $a$ be the output element to an $\Ainfty$ relation 
  with input sequence $a_1,\dots,a_n$ with $a_i\cdot a_{i+1}=0$ for
  all $i=1,\dots,n-1$ and weight $0$. For terms of weight $0$, we have cancellations of terms the following types:
  \[ [LL-], [L0-], [RR+], [R0+], (2C-), (2C+), (2L+), (2R-).\]
\end{lemma}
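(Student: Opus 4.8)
The plan is to follow the proof of Lemma~\ref{lem:LLmRRpw0} essentially verbatim, keeping the same nine-cell table indexed by the comparisons of $|a|$ with $|a_1|$ and with $|a_n|$, and realizing the cancellation by the same two elementary moves: push in the distinguished edge joining the roots of the two components, and then either pull out a fresh edge or slide the root to the next available leaf. As in~\cite[Table~2]{TorusAlg}, the left-hand entry of each cell is governed by the comparison of $|a|$ with $|a_1|$ and the right-hand entry by the comparison with $|a_n|$.

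The single new feature of the weighted setting is that a pull-out need not disconnect the graph: if the exposed edge lies on a length-one cycle, the pull-out leaves a connected graph and produces a $\mu^{\vec{w}}_0$ operation (labeled by some $U_i$, as in the proof of Lemma~\ref{lem:CancelC2}) feeding into the surviving extended vertex, which is precisely the configuration recorded by the types $[L0-]$ and $[R0+]$. Thus in each cell where the unweighted argument produced a term of type $[LL-]$ (resp.\ $[RR+]$), the weighted argument now produces a term of type $[LL-]$ or $[L0-]$ (resp.\ $[RR+]$ or $[R0+]$), according to whether the relevant edge is a bridge or a loop. I would first record that terms of type $[L0-]$ and $[R0+]$ automatically satisfy the hypothesis $a_i\cdot a_{i+1}=0$, by the same labeling artifact used for $[LL-]$ and $[RR+]$ (compare Lemma~\ref{lem:NonMultiplyable}), so that they legitimately fall under the present lemma. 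The four cells with $|a|\geq\max(|a_1|,|a_n|)$ are handled exactly as before by moving the root, a move that neither creates nor destroys loops and so is insensitive to the weighted enhancement; hence only the cells with $|a|<|a_1|$ or $|a|<|a_n|$ require new attention.

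The main obstacle I anticipate is confirming that the enlarged pairing is a fixed-point-free involution. Concretely, I must check that the push-in move remains inverse to the refined pull-out/root-slide move in the presence of loops: applied to a term of type $[L0-]$ or $[R0+]$, the push-in must merge the $\mu^{\vec{w}}_0$ loop back into the surviving component rather than attempting to separate a nonexistent second component, and re-pulling must recover precisely the original $U_i$-labeled loop. Once this bridge-versus-loop bookkeeping is seen to be respected across the involution, no term can be paired with itself, and the table closes up just as in the unweighted case, now with $[L0-]$ and $[R0+]$ absorbed alongside $[LL-]$ and $[RR+]$.
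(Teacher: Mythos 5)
Your proposal is correct and follows essentially the same route as the paper: the paper likewise reuses the nine-cell table from Lemma~\ref{lem:LLmRRpw0}, with $[L0-]$ adjoined to the $|a|<|a_1|$ column and $[R0+]$ to the $|a|<|a_n|$ row, and identifies the only new phenomenon as the fact that pulling out an edge in a weighted graph may fail to disconnect it (dually, one may push in an edge corresponding to a $\mu_0$ term), which is exactly what produces the $[L0-]$ and $[R0+]$ types.
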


\begin{proof}

  The cancellation is similar to the one from Lemma~\ref{lem:LLmRRpw0}. Specifically, the cancellation is given in the following types of pairs: \\
  \begin{tabular}{r||c|c|c}
      & $|a|<|a_1|$ & $|a|=|a_1|$ & $|a|>|a_1|$ \\ 
  \hline 
  \hline
  $|a|<|a_n|$ & $[LL-]$
  $\Leftrightarrow$ $[RR+]$
            & $(2C-)$ $\Leftrightarrow$ $[RR+]$
              & $(2R-)$ $\Leftrightarrow$ $[RR+]$  \\
& $\cup[L0-]$ \qquad $\cup[R0+]$ & \qquad \qquad\qquad$\cup[R0+]$ & \qquad \qquad\qquad$\cup [R0+]$  \\
\hline 
  $|a|=|a_n|$ & $[LL-]$ 
$\Leftrightarrow$ $(2C+)$
            & $(2C-)$ $\Leftrightarrow$ $(2C+)$
              & $(2R-)$ $\Leftrightarrow$ $(2C+)$\\
& $\cup[L0-]$ \qquad \qquad\qquad & &   \\
\hline
  $|a|>|a_n|$ & $[LL-]$ 
  $\Leftrightarrow$ $(2L+)$
            & $(2C-)$ $\Leftrightarrow$ $(2L+)$
& $(2R-)$ $\Leftrightarrow$ $(2L+)$ \\
& $\cup[L0-]$\qquad\qquad\qquad & &  \\
\end{tabular}
  
  The involution realizing these cancellations are defined analogously
  to the proof of Lemma~\ref{lem:LLmRRpw0}. Roughly, we push in a edge
  and then pull out another edge. The key difference now is that,
  when one pulls out an edge in a weighted graph, the 
  graph is not necessarily disconnected: rather, one may cause a $\mu_0$ term
  to appear. Dually, one need not push in an edge for a two-component graph:
  rather, one can push in an edge corresponding to a term in $\mu_0$.
  This introduces the terms of type $[L0-]$ and $[R0+]$.

\begin{figure}[ht]
\input{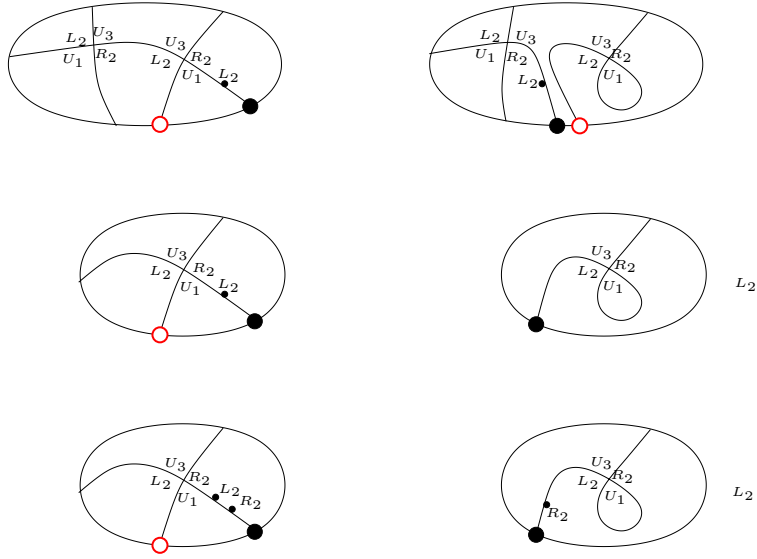}
\caption{\label{fig:L0mEx} {\bf{Examples of cancellation of terms of type $[L0-]$, as in Lemma~\ref{lem:LLmRRp}.}}}
\end{figure}

  In Figure~\ref{fig:L0mEx}, we have illustrated cases of cancellations with terms in the first column
  of the table, with terms of the form $[L0-]$.
  The three lines in the figure exhibit the following three relations, respectively:
  \begin{align*} \mu_6(U_2,U_3^2,L_2,U_1,U_2,\mu_0^{\eVec{1}})
    & =\mu_4(\mu_2^{\eVec{1}}(U_2,U_3^2),L_2,U_1,U_2) \\
    \mu_4(U_2,U_3,L_2,\mu_0^{\eVec{1}})&=
    \mu_2(\mu_2^{\eVec{1}}(U_2,U_3),L_2) \\
    \mu_4(U_2R_2,U_3,L_2,\mu_0^{\eVec{1}})&=
    \mu_2(\mu_2^{\eVec{1}}(U_2R_2,U_3),L_2).
    \end{align*}
\end{proof}

\begin{proof}[of Theorem~\ref{thm:unWeightedAinftyAlgebra}]
In view of the above lemmas, Theorem~\ref{thm:WeightedAinftyAlgebra}
follows from the same reasoning as in the unweighted case (i.e. in the
proof of Theorem~\ref{thm:unWeightedAinftyAlgebra} above).

In fact, by Theorem~\ref{thm:unWeightedAinftyAlgebra}, it suffices to consider
$\Ainfty$ relations 
with incoming sequence of algebra
elements $(a_1,\dots,a_n)$ and weight vector $\vec{w}=(w_1,\dots,w_m)$ satisfies:
\begin{itemize}
\item  $|a_i|>0$ for all $i$,
\item non-zero weight vector $\vec{w}$.
\end{itemize}

For the weighted case, each non-trivial
$\Ainfty$ relation corresponds to a sequence $(a_1,\dots,a_n)$ of
incoming algebra elements (with $|a_i|>0$) and a weight vector
$\vec{w}$.

Again,  we measure the complexity by the number $\ell$ of
consecutive pairs of inputs with non-trivial $\mu_2$ (understood now
as $\mu_2^{0}$).  By Lemma~\ref{lem:NonMultiplyable}, the non-trivial $\Ainfty$ relations have $\ell\leq 1$.

Consider the cases where $\ell=1$ and the output is a power of $t$.
Lemma~\ref{lem:CancelC2} gives the cancellations of terms of type
$(C2*)$ with those in $[C**]\setminus ([CL+]\cup[CR-]\cup[C0+]\cup[C0-])$.
Again, these excluded terms have $\ell=0$.
When the output is not just a power of $t$, Lemma~\ref{lem:CancelL2}
cancels terms of type $(L2)$ and $(R2)$ against terms of type
\[ [L**]\cup[R**]\setminus ([L0+]\cup[LL-]\cup[LR+]\cup[R0-]\cup[RL-]\cup[RR+]).
\]

In the case where $\ell=0$ and the output is a power of $t$,
Lemma~\ref{lem:CLpCLm} now gives the cancellations
of the terms in $[CL+]\cup[CR-]\cup [C0+]\cup[C0-]$. 

Lemma~\ref{lem:CancelL0pR0m} cancels
$[L0+]$ against $[R0-]$; while Lemma~\ref{lem:RLmLRp}
gives the cancellation of $[RL-]$ and $[LR+]$.
The remaining cancellations are provided by Lemma~\ref{lem:LLmRRp}.
\end{proof}

\section{The $\Ainfty$ relations with signs}
\label{sec:Signs}

Analogous to~\cite[Section~7]{TorusAlg}, lifting the constructions of
this paper to $\Z$ coefficients is a fairly straightforward matter.

As a preliminary step, 
we recall the sign conventions on weighted $\Ainfty$ algebras,
following~\cite[Section~7]{TorusAlg}, generalizing~\cite{Keller}.

To make sense of these sign conventions, one must start with a
$\Zmod{2}$ grading on the underlying algebra, which in our case is
$\Clg[t]$. We endow it with a trivial $\Zmod{2}$ grading,
supported entirely in grading $0$.

The sign convention on a
weighted $\Ainfty$ algebras states:
\begin{equation}
  \label{eq:WeightedAinf}
  \sum_{\substack{n=r+s+t\\w=u+v}} (-1)^{r+st} \mu^u_{r+1+t}\circ(\Id^{\otimes r} \otimes \mu^v_s \otimes \Id^{\otimes t})=0.
\end{equation}
(In general, the composition of homomorphisms, such as the ones
appearing above, satisfy a Leibinz rule. For our purposes,
we can suppress this, as our algebra is supported in degree $0$.)

\begin{thm}
  \label{thm:WeightedAinftyAlgebraZ}
  The operations $\{\mu^{\vec{w}}_n\}$ give $\ClgZ[t]$ the structure
  of a unital, weighted $\Ainfty$ algebra (over $\Z$).
\end{thm}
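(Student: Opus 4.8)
The plan is to follow the strategy of \cite[Section~7]{TorusAlg}: keep verbatim the combinatorial model and the fixed-point-free involutions built in the proofs of Lemmas~\ref{lem:CancelC2w0}--\ref{lem:LLmRRp}, and upgrade each involution from a ``cancellation mod $2$'' to a genuinely \emph{sign-reversing} involution over $\Z$. Once every pairing is shown to reverse sign, the two matched terms contribute $+x$ and $-x$ to the signed structure relation~\eqref{eq:WeightedAinf}, and that relation holds integrally. Unitality is inherited unchanged, since the operations forced to vanish by strict unitality are exactly the same as in characteristic $2$.

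First I would attach an integer sign $\sigma(\Gamma)\in\{\pm1\}$ to each centered or extended tiling graph $\Gamma$, so that
\[ \mu^{\vec w}_n(a_1,\dots,a_n)=\sum_{\Gamma}\sigma(\Gamma)\,\iota(\Gamma)\,t^{d(\Gamma)}. \]
Following \cite{TorusAlg}, $\sigma(\Gamma)$ is most conveniently packaged as the sign of an explicit permutation (equivalently, a determinant of an incidence matrix built from the edges and regions of $\Gamma$) recording how a fixed reference ordering of the data of $\Gamma$ compares with the counterclockwise boundary ordering rooted at the root edge. The first task is to check that $\sigma$ is well defined, independent of the auxiliary reference choice, and that it assigns the correct signs to the generators: to $\mu^0_2$ (the underlying associative product), to each $\mu^{\eVec i}_0=U_i$, and to the single-vertex graph realizing the generating operation $\mu_{2m-2}$. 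Compatibility of $\sigma$ with the nesting is then governed by the Koszul factor $(-1)^{r+st}$ of~\eqref{eq:WeightedAinf}, which records the position $r$ and arity $s$ of the inner node in a composite.

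Next I would revisit each cancellation lemma and compute the net sign change under its move. For the ``pull out an interior edge'' involution of Lemmas~\ref{lem:CancelC2w0} and~\ref{lem:CancelC2}, exposing an interior edge to the boundary alters the ordering permutation by a controlled transposition; the content of the verification is that this transposition sign exactly offsets the change in the Koszul exponent $(-1)^{r+st}$ between the type-$(C2)$ term and its type-$[C**]$ partner. The ``switch the two roots'' involutions (Lemmas~\ref{lem:CLpCLmw0}, \ref{lem:CLpCLm}, \ref{lem:RLmLRpw0}, \ref{lem:RLmLRp}), the ``move the root'' cases, and the $\mu_0$-insertion moves (Lemmas~\ref{lem:CancelL0pR0m} and~\ref{lem:LLmRRp}) are all single explicit local modifications of $\Gamma$, so in each case the induced change in $\sigma$ is a transposition sign that I would match, case by case, against the discrepancy in $(-1)^{r+st}$ to confirm that the product of the two sign changes is $-1$. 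Because the involutions and the case division are literally those already used over $\Zmod 2$, no new terms or pairings arise; only the signs require attention.

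The main obstacle will be the bookkeeping of step two: choosing a \emph{single} convention for $\sigma$ under which \emph{every} move in \emph{every} one of the cancellation lemmas is simultaneously sign-reversing, and reconciling it with the Koszul sign of~\eqref{eq:WeightedAinf}. The weighted-specific moves are the delicate ones: creating or destroying a $\mu_0$-vertex inserts or removes a monogon (Type $1$ or Type $m$) or a bigon (Type $i$) region, and I would need to verify that such a region contributes trivially to the ordering sign while shifting the Koszul exponent by precisely the compensating amount. As in \cite[Section~7]{TorusAlg}, realizing $\sigma$ as a determinant makes each local move manifestly multiply it by $-1$, which is what keeps this bookkeeping tractable.
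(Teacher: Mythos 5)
Your plan misses the one observation that makes the paper's proof essentially trivial, and in its place proposes machinery that is both unnecessary and at odds with the statement. The theorem is about the operations $\mu^{\vec{w}}_n$ exactly as already defined: unsigned counts of tiling graphs, each contributing with coefficient $+1$. The paper does not attach any sign $\sigma(\Gamma)$ to a graph. Instead it invokes Lemma~\ref{lem:ActionsAreEven}: every nonzero operation has an even number of inputs, so in Equation~\eqref{eq:WeightedAinf} the inner arity $s$ is always even and the Koszul sign collapses to $(-1)^{r}$. The whole proof is then a short parity check of the input position $r$ for each matched pair from the cancellation lemmas: for the pull-out involution one finds $r_2=r_1+1$ in the $[CR*]$ and $[C0*]$ cases and $r_1+1=r_2+n_2$ with $n_2$ even in the $[CL*]$ case, so $r_1+r_2$ is odd; and every term of type $[**+]$ has $r=0$ while every term of type $[**-]$ has $r=n-1$ odd. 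Hence the two members of each cancelling pair automatically carry opposite Koszul signs, with no signs on graphs needed.

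By contrast, your proposal to define the operations as $\sum_\Gamma \sigma(\Gamma)\,\iota(\Gamma)\,t^{d(\Gamma)}$ with a possibly nontrivial permutation/determinant sign would change the operations, so unless you ultimately verify $\sigma\equiv +1$ you would be proving a statement about a different structure than the one asserted. Moreover, the actual content of your argument --- choosing a single convention for $\sigma$ under which every move in every lemma is sign-reversing against $(-1)^{r+st}$ --- is exactly the part you defer as ``the main obstacle,'' so the proof is not completed. The fix is to drop $\sigma$ entirely, prove the evenness statement of Lemma~\ref{lem:ActionsAreEven} (an Euler characteristic count on the tiling graph), and then run the $r$-parity comparison above for each of the involutions you already have.
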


\begin{proof}
  We note that the pairwise cancellations from the proof of
  Theorem~\ref{thm:WeightedAinftyAlgebra} occur with opposite signs.

  To this end, it helps to notice Lemma~\ref{lem:ActionsAreEven}.
  
  Consider the cancellation from Lemma~\ref{lem:CancelC2}.  Let
  $(r_1,s_1,t_1)$ be the integers in Equation~\eqref{eq:WeightedAinf}
  for the term of type $(C2)$ and $(r_2,s_2,t_2)$ be integers from the
  cancelling term of type $[C**]$. Our aim is to show that $r_1+s_1
  t_1$ and $r_2+s_2t_2$ have opposite parity.  Since $s_1$ and $s_2$
  are even (Lemma~\ref{lem:ActionsAreEven}), it suffices to show that
  $r_1+r_2$ is odd. 
  The cancelling term can be of the following types:
  \begin{itemize}
  \item Type $[CR*]$, so that
    $r_1<r_2$. In this case, $r_2=r_1+1$.
  \item Type $[CL*]$, when $r_2<r_1$.  In this case, $r_1+1=r_2+n_2$. 
  \item Type $[C0*]$. In that case, $r_2=r_1+1$.
  \end{itemize}
  In all the above cases, $r_1+r_2\equiv 1\pmod{2}$ (again,
  using Lemma~\ref{lem:ActionsAreEven}). This same argument
  also handles the cancellations from Lemma~\ref{lem:CancelL2}.

  Note that for all terms of type $[**+]$, we have $r=0$;
  so these contribute to the $A_\infty$ relation with sign $+1$;
  for all term of type $[**-]$, we have $r=n-1$,
  where $n$ is the number of inputs to the operation at the root
  of the tree. Thus, by Lemma~\ref{lem:ActionsAreEven}, these contribute the
  sign of $-1$.
  It now follows that 
  for  cancellations from Lemmas~\ref{lem:RLmLRp}, 
  \ref{lem:CancelL0pR0m}, and~\ref{lem:LLmRRp},
  the terms in the $A_\infty$ relation all appear 
  with cancelling sign.
\end{proof}

\bibliographystyle{plain}
\bibliography{biblio}

\end{document}